\documentclass{article}%
\usepackage{amsmath}
\usepackage{amsfonts}
\usepackage{amssymb}
\usepackage{graphicx}%
\setcounter{MaxMatrixCols}{30}
\providecommand{\U}[1]{\protect \rule{.1in}{.1in}}
\newtheorem{theorem}{Theorem}

\newtheorem{definition}[theorem]{Definition}

\newtheorem{lemma}[theorem]{Lemma}

\newtheorem{proposition}[theorem]{Proposition}
\newtheorem{remark}[theorem]{Remark}

\newenvironment{proof}[1][Proof]{\noindent \textbf{#1.} }{\  \rule{0.5em}{0.5em}}
\begin{document}

\title{Some Estimates for Martingale Representation under $G$-Expectation}

\author{Ying HU \thanks{IRMAR, Universit\'{e}
de Rennes I, France},  Shige PENG\thanks{Institute of Mathematics,
Institute of Finance, Shandong University, 250100, Jinan, China,
School of Mathematics, Fudan University, peng@sdu.edu.cn, The author
thanks the partial support from The National Basic Research Program
of China (973 Program) grant No. 2007CB814900 (Financial Risk) .}}
\maketitle

\begin{abstract}
We provides some useful estimates for solving martingale representation
problem under $G$-expectations. We also study the corresponding conditions for
the existence and uniqueness.

\end{abstract}

\section{Introduction}

Many important progresses in the domain of $G$-expectation and related
$G$-Brownian motion have been made in recent years since the introduction of
this theory. But some very important questions still remain open. An
interesting and challenging one is the so called $G$-martingale representation
problem proposed in \cite[Peng2007]{Peng2007} of the following form:
\begin{equation}
M_{t}=\mathbb{E}_{G}[X|\Omega_{t}]=\mathbb{E}_{G}[X]+\int_{0}^{t}z_{s}%
dB_{s}+\int_{0}^{t}\eta_{s}d\left \langle B\right \rangle _{s}-\int_{0}%
^{t}2G(\eta_{s})ds, \label{0.1}%
\end{equation}
for some given element $X\in L_{G}^{2}(\Omega_{T})$. As Peng pointed out in
many of his lectures and discussions, this formulation permits us to treat the
following version of BSDE driven by $G$-Brownian motion of the form%
\[
y_{t}=X+\int_{t}^{T}f(s,y_{s},z_{s},\eta_{s})ds-\int_{t}^{T}z_{s}dB_{s}%
-\int_{t}^{T}\eta_{s}d\left \langle B\right \rangle _{s}+\int_{t}^{T}2G(\eta
_{s})ds.
\]
\cite[Peng2007, Peng2010]{Peng2007, Peng2010} had only treated the
above form of $G$-martingale for the situation where $X\in
L_{ip}(\Omega_{T})$ and $G$ is non-degenerate.

The first step towards a proof to (\ref{0.1}) for a more general $X\in
L_{G}^{2}(\Omega_{T})$ was given by \cite{XuJing} for the case where $X$
satisfies $\mathbb{E}_{G}[X]=-\mathbb{E}_{G}[-X]$. In this case the
$G$-martingale $M_{t}=\mathbb{E}_{G}[X|\Omega_{t}],t\geq0$, is symmetric,
namely $-M$ is also a $G$-martingale. They obtained a representation of the
form $X=\mathbb{E}_{G}[X]+\int_{0}^{T}z_{s}dB_{s}$, corresponding to the case
$\eta \equiv0$.

Observe that in the formulation of (\ref{0.1}) $M_{t}=M_{t}^{s}-A_{t}$, where
$M^{s}=\mathbb{E}_{G}[X]+\int_{0}^{t}z_{s}dB_{s}$ is a symmetric martingale
and
\[
A_{t}=\int_{0}^{t}2G(\eta_{s})ds-\int_{0}^{t}\eta_{s}d\left \langle
B\right \rangle _{s},\  \ t\in \lbrack0,T]
\]
is a nondecreasing process with $A_{0}=0$ such that $-A_{t}$ is a
$G$-martingale. A very interesting problem is whether $M$ has a unique
decomposition $M^{s}-A$. An important progress of this problem was obtained by
Soner, Touzi and Zhang \cite{STZ}. They have proved that, under the condition%
\[
\left \Vert X\right \Vert _{\mathbb{L}_{\mathcal{P}}^{2}}:=\mathbb{E}_{G}%
[\sup_{t\in \lbrack0,T]}|M_{t}|^{2}]]<\infty
\]
there exists a unique decomposition $M=M^{s}-A$ such that
\[
\mathbb{E}_{G}[A_{T}^{2}]+\mathbb{E}_{G}[\int_{0}^{T}|z_{s}|^{2}ds]\leq
C^{\ast}\left \Vert X\right \Vert _{\mathbb{L}_{\mathcal{P}}^{2}},
\]
where $C^{\ast}$ is a universal constant. More recently, \cite[Song,2010]%
{Song} has significantly improved their result by only assuming that $X\in
L_{G}^{p}(\Omega_{T})$ for $p>1$. His result will be used in this paper (see
the next section).

In this paper we give an a priori estimate of $\eta$ in the representation of
(\ref{0.1}): if for a fixed $\varepsilon>0$, $G_{\varepsilon}%
(a)=G(a)-\varepsilon/2|a|$ is a sublinear function of $a$, then we have
\[
\mathbb{E}_{G_{\varepsilon}}\int_{0}^{T}|\eta_{t}|dt\leq \varepsilon
^{-1}(\mathbb{E}_{G}[X]+\mathbb{E}_{G_{\varepsilon}}[-X]).
\]
This estimate indicates clearly that the norm of $\eta$ can be dominated by
$\mathbb{E}_{G}[X]+\mathbb{E}_{G_{\varepsilon}}[-X]$ and thus provides a new
proof of \cite{Peng2007,Peng2010} for the existence of the representation in
the case $X\in L_{ip}(\Omega_{T})$. We will also give a proof of uniqueness of
$\eta$. We then show that, if the increasing part $A$ of $M$ satisfies a type
of bounded variation condition, then we can also prove the existence of the representation.

This paper is organized as follows: after given some basic settings in the
next section, we give the a priori estimate and then a proof of the
representation for the case where $X\in L_{ip}(\Omega_{T})$, in Section 3. In
Section 4 we study the uniqueness of representation theorem for $X\in
L_{G}^{p}(\Omega_{T})$. In Section 5 we study the existence of the
representation of $G$-martingales.

\section{Preliminaries}

We present some preliminaries in the theory of sublinear expectations and the
related $G$-Brownian motions. More details can be found in Peng
\cite{Peng2006a}, \cite{Peng2006b} and \cite{Peng2007}.

\begin{definition}
{\label{Def-1} { Let }}$\Omega$ be a given set and let $\mathcal{H}$ be a
linear space of real valued functions defined on $\Omega$ with $c\in
\mathcal{H}$ for all constants $c$, and $|X|\in \mathcal{H}$, if $X\in
\mathcal{H}$. $\mathcal{H}$ is considered as the space of our
\textquotedblleft random variables\textquotedblright. {{A \textbf{sublinear
expectation }$\mathbb{\hat{E}}$ on $\mathcal{H}$ is a functional
$\mathbb{\hat{E}}:\mathcal{H}\mapsto \mathbb{R}$ satisfying the following
properties: for all $X,Y\in \mathcal{H}$, we have\newline \  \  \newline%
\textbf{(a) Monotonicity:} \  \  \  \  \  \  \  \  \  \  \  \  \ If $X\geq Y$ then
$\mathbb{\hat{E}}[X]\geq \mathbb{\hat{E}}[Y].$\newline \textbf{(b) Constant
preserving: \  \ }\  \ $\mathbb{\hat{E}}[c]=c$.\newline \textbf{(c)}
\textbf{Sub-additivity: \  \  \  \ }}}\  \  \  \  \  \  \  \ $\mathbb{\hat{E}%
}[X]-\mathbb{\hat{E}}[Y]\leq \mathbb{\hat{E}}[X-Y].$\newline{{\textbf{(d)
Positive homogeneity: } \ $\mathbb{\hat{E}}[\lambda X]=\lambda \mathbb{\hat{E}%
}[X]$,$\  \  \forall \lambda \geq0$.\newline}}\newline The triple $(\Omega
,\mathcal{H},\mathbb{\hat{E}})$ is called a sublinear expectation space.
$X\in \mathcal{H}$ is called a random variable in $(\Omega,\mathcal{H})$. We
often call $Y=(Y_{1},\cdots,Y_{d})$, $Y_{i}\in \mathcal{H}$ a $d$-dimensional
random vector in $(\Omega,\mathcal{H})$. Let us consider a space of random
variables $\mathcal{H}$ satisfying: if $X_{i}\in \mathcal{H}$, $i=1,\cdots,d$,
then%
\[
\varphi(X_{1},\cdots,X_{d})\in \mathcal{H}\text{,\  \ for all }\varphi \in
C_{b,Lip}(\mathbb{R}^{d}),
\]
where $C_{b,Lip}(\mathbb{R}^{d})$ is the space of all bounded and Lipschitz
continuous functions on $\mathbb{R}^{d}$. An $m$-dimensional random vector
$X=(X_{1},\cdots,X_{m})$ is said to be independent of another $n$-dimensional
random vector $Y=(Y_{1},\cdots,Y_{n})$ if
\[
\mathbb{\hat{E}}[\varphi(X,Y)]=\mathbb{\hat{E}}[\mathbb{\hat{E}}%
[\varphi(X,y)]_{y=Y}],\  \  \text{for }\varphi \in C_{b,Lip}(\mathbb{R}^{m}%
\times \mathbb{R}^{n}).
\]
Let $X_{1}$ and $X_{2}$ be two $n$--dimensional random vectors defined
respectively in {sublinear expectation spaces }$(\Omega_{1},\mathcal{H}%
_{1},\mathbb{\hat{E}}_{1})${ and }$(\Omega_{2},\mathcal{H}_{2},\mathbb{\hat
{E}}_{2})$. They are called identically distributed, denoted by $X_{1}\sim
X_{2}$, if
\[
\mathbb{\hat{E}}_{1}[\varphi(X_{1})]=\mathbb{\hat{E}}_{2}[\varphi
(X_{2})],\  \  \  \forall \varphi \in C_{b.Lip}(\mathbb{R}^{n}).
\]
If $X$, $\bar{X}$ are two $m$-dimensional random vectors in $(\Omega
,\mathcal{H},\mathbb{\hat{E}})$ and $\bar{X}$ is identically distributed with
$X$ and independent of $X$, then $\bar{X}$ is said to be an independent copy
of $X$.
\end{definition}

\begin{definition}
(\textbf{$G$-normal distribution}) \label{Def-Gnormal} A $d$-dimensional
random vector $X=(X_{1},\cdots,X_{d})$ in a sublinear expectation space
$(\Omega,\mathcal{H},\mathbb{\hat{E}})$ is called $G$-normal distributed if
for each $a\,$, $b\geq0$ we have
\begin{equation}
aX+b\bar{X}\sim \sqrt{a^{2}+b^{2}}X,\  \label{srt-a2b2}%
\end{equation}
where $\bar{X}$ is an independent copy of $X$. Here the letter $G$ denotes the
function
\[
G(A):=\frac{1}{2}\mathbb{\hat{E}}[(AX,X)]:\mathbb{S}_{d}\mapsto \mathbb{R}.
\]
It is also proved in Peng \cite{Peng2006b, Peng2007} that, for each
$\mathbf{a}\in \mathbb{R}^{d}$ and $p\in \lbrack1,\infty)$%
\[
\mathbb{\hat{E}}[|\left(  \mathbf{a},X\right)  |^{p}]=\frac{1}{\sqrt
{2\pi \sigma_{\mathbf{aa}^{T}}^{2}}}\int_{-\infty}^{\infty}|x|^{p}\exp \left(
\frac{-x^{2}}{2\sigma_{\mathbf{aa}^{T}}^{2}}\right)  dx,
\]
where $\sigma_{\mathbf{aa}^{T}}^{2}=2G(\mathbf{aa}^{T})$.
\end{definition}

\begin{definition}
A $d$-dimensional stochastic process $\xi_{t}(\omega)=(\xi_{t}^{1},\cdots
,\xi_{t}^{d})(\omega)$ defined in a sublinear expectation space $(\Omega
,\mathcal{H},\mathbb{\hat{E}})$ is a family of $d$-dimensional random vectors
$\xi_{t}$ parameterized by $t\in \lbrack0,\infty)$ such \ that $\xi_{t}^{i}%
\in \mathcal{H}$, for each $i=1,\cdots,d$ and $t\in \lbrack0,\infty)$.
\end{definition}

The most typical stochastic process in a sublinear expectation space is the
so-called $G$-Brownian motion.

\begin{definition}
\label{Def-3}(\textbf{\cite{Peng2006a} and \cite{Peng2007}}) Let
$G:\mathbb{S}_{d}\mapsto \mathbb{R}$ be a given monotonic and sublinear
function. A process $\{B_{t}(\omega)\}_{t\geq0}$ in a sublinear expectation
space $(\Omega,\mathcal{H},\mathbb{\hat{E}})$ is called a $G$%
\textbf{--Brownian motion} if for each $n\in \mathbb{N}$ and $0\leq
t_{1},\cdots,t_{n}<\infty$,$\ B_{t_{1}},\cdots,B_{t_{n}}\in \mathcal{H}$ and
the following properties are satisfied: \newline \textsl{(i)} $B_{0}(\omega
)=0$;\newline \textsl{(ii)} For each $t,s\geq0$, the increment $B_{t+s}-B_{t}$
is independent of $(B_{t_{1}},B_{t_{2}},\cdots,B_{t_{n}})$, for each
$n\in \mathbb{N}$ and $0\leq t_{1}\leq \cdots \leq t_{n}\leq t$; \newline%
\textsl{(iii)} $B_{t+s}-B_{s}\sim B_{t}$, for $s,t\geq0$ and $\mathbb{\hat{E}%
}[|B_{t}|^{3}]/t\rightarrow0$, as $t\rightarrow0$. \newline \textsl{(vi)
}$\mathbb{E}[B_{t}]=-\mathbb{E}[-B_{t}]=0$, for $t\geq0$.
\end{definition}

It was proved that, for each $t>0$, $B_{t}/\sqrt{t}$ is $G$-normal distributed
with $G(A)=\frac{1}{2}\mathbb{\hat{E}}[\left \langle AB_{1},B_{1}\right \rangle
]$. In many cases $B$ is also called $G$-Brownian motion when it only
satisfies (i)-(iii), and a $G$-Brownian motion satisfying (i)-(iv) is called
symmetric $G$-Brownian motion. In this paper we only discuss symmetric
$G$-Brownian motions.

We denote:

$\Omega=C_{0}^{d}(\mathbb{R}^{+})$ the space of all $\mathbb{R}^{d}$-valued
continuous functions $(\omega_{t})_{t\in \mathbb{R}^{+}}$, with $\omega_{0}=0$,
equipped with the distance
\[
\rho(\omega^{1},\omega^{2}):=\sum_{i=1}^{\infty}2^{-i}[(\max_{t\in \lbrack
0,i]}|\omega_{t}^{1}-\omega_{t}^{2}|)\wedge1].
\]
We denote by $\mathcal{B}(\Omega)$ the Borel $\sigma$-algebra of $\Omega$ and
by $\mathcal{M}$ the collection of all probability measure on $(\Omega
,\mathcal{B}(\Omega))$.

We also denote, for each $t\in \lbrack0,\infty)$:

\begin{itemize}
\item $\Omega_{t}:=\{ \omega_{\cdot \wedge t}:\omega \in \Omega \}$,

\item $\mathcal{F}_{t}:=\mathcal{B}(\Omega_{t})$,

\item $L^{0}(\Omega)$: the space of all $\mathcal{B}(\Omega)$-measurable real functions,

\item $L^{0}(\Omega_{t})$: the space of all $\mathcal{B}(\Omega_{t}%
)$-measurable real functions,

\item $B_{b}(\Omega)$: all bounded elements in $L^{0}(\Omega)$, $B_{b}%
(\Omega_{t}):=B_{b}(\Omega)\cap L^{0}(\Omega_{t})$,

\item $C_{b}(\Omega)$: all continuous elements in $B_{b}(\Omega)$;
$C_{b}(\Omega_{t}):=B_{b}(\Omega)\cap L^{0}(\Omega_{t})$.
\end{itemize}

In \cite{Peng2006b, Peng2007}, a $G$-Brownian motion is constructed on a
sublinear expectation space $(\Omega,\mathbb{L}_{G}^{p}(\Omega),\mathbb{\hat
{E}})$ for $p\geq1$, with $\mathbb{L}_{G}^{p}(\Omega)$ such that
$\mathbb{L}_{G}^{p}(\Omega)$ is a Banach space under the natural norm
$\left \Vert X\right \Vert _{p}:=\mathbb{\hat{E}}[|X|^{p}]^{1/p}$. In this space
the corresponding canonical process $B_{t}(\omega)=\omega_{t}$, $t\in
\lbrack0,\infty)$, for $\omega \in \Omega$ is a $G$-Brownian motion.

Moreover, the notion of $G$-conditional expectation was also introduced
$\mathbb{\hat{E}}[\cdot|\Omega_{t}]:\mathbb{L}_{G}^{p}(\Omega)\mapsto
\mathbb{L}_{G}^{p}(\Omega_{t})$, for each $t\geq0$. It satisfies:
$\mathbb{\hat{E}}[${{$\mathbb{\hat{E}}[X|\Omega_{t}]\Omega_{s}]=$}%
}$\mathbb{\hat{E}}${{$[X|\Omega_{t\wedge s}]$ and}} {{\newline \textbf{(a)
Monotonicity:} \  \  \  \  \  \  \  \  \  \  \  \  \ If $X\geq Y$ then $\mathbb{\hat{E}%
}[X|\Omega_{t}]\geq{{\mathbb{\hat{E}}[Y|\Omega_{t}]}},$\newline \textbf{(b)
Constant preserving: \  \ }\  \ {$\mathbb{\hat{E}}[\eta|\Omega_{t}]$}$=$}}$\eta
${,\  \  \ }$\eta \in \mathbb{L}_{G}^{p}(\Omega_{t}),${{\newline \textbf{(c)}
\textbf{Sub-additivity: \  \  \  \ }}}\  \  \  \  \  \  \  \ {{$\mathbb{\hat{E}%
}[X|\Omega_{t}]$}}$-{{\mathbb{\hat{E}}[Y|\Omega_{t}]}}\leq{{\mathbb{\hat{E}%
}[X-Y|\Omega_{t}]}}.$\newline{{\textbf{(d) Positive homogeneity: }
\ {$\mathbb{\hat{E}}[\eta X|\Omega_{t}]$}$=\eta${$\mathbb{\hat{E}}%
[X|\Omega_{t}]$},$\  \ $for bounded and non-negative $\eta \in$}}$\mathbb{L}%
_{G}^{p}(\Omega_{t})${{.}}\newline Furthermore, it is proved in \cite{DHP}
(see also \cite{HP} for a simple proof) that $L^{0}(\Omega)\supset
\mathbb{L}_{G}^{p}(\Omega)\supset C_{b}(\Omega)$, and there exists a weakly
compact family $\mathcal{P}$ of probability measures defined on $(\Omega
,\mathcal{B}(\Omega))$ such that
\[
\mathbb{\hat{E}}[X]=\sup_{P\in \mathcal{P}}E_{P}[X],\  \text{for}\ X\in
C_{b}(\Omega).
\]
We introduce the natural Choquet capacity (see \cite{Choquet}):%
\[
\hat{c}(A):=\sup_{P\in \mathcal{P}}P(A),\  \ A\in \mathcal{B}(\Omega).
\]
The space $\mathbb{L}_{G}^{2}(\Omega)$ was also introduced independently in
\cite{Denis-M} in a quite different framework.

\begin{definition}
A set $A\subset \Omega$ is polar if $\hat{c}(A)=0$. A property holds
\textquotedblleft quasi-surely\textquotedblright \ (q.s.) if it holds outside a
polar set.
\end{definition}

$\mathbb{L}_{G}^{p}(\Omega)$ can be characterized as follows:
\[
\mathbb{L}_{G}^{p}(\Omega)=\{X\in \mathbb{L}^{0}(\Omega)|\sup_{P\in \mathcal{P}%
}E_{P}[|X|^{p}]<\infty \text{, and }X\text{ is }\hat{c}\text{-quasi surely
continuous}\}.\text{ }%
\]
We also denote, for $p>0$,

\begin{itemize}
\item $\mathcal{L}^{p}:=\{X\in L^{0}(\Omega):\mathbb{\hat{E}}[|X|^{p}%
]=\sup_{P\in \mathcal{P}}E_{P}[|X|^{p}]<\infty \}$;\

\item $\mathcal{N}^{p}:=\{X\in L^{0}(\Omega):\mathbb{\hat{E}}[|X|^{p}]=0\}$;

\item $\mathcal{N}:=\{X\in L^{0}(\Omega):X=0$, $\hat{c}$-quasi surely
(q.s.).$\}$
\end{itemize}

It is easy to see that $\mathcal{L}^{p}$ and $\mathcal{N}^{p}$ are linear
spaces and $\mathcal{N}^{p}=\mathcal{N}$, for each $p>0$. We denote by
$\mathbb{L}^{p}:=\mathcal{L}^{p}/\mathcal{N}$. As usual, we do not make the
distinction between classes and their representatives.

Now, we give the following two propositions which can be found in \cite{DHP}.

\begin{proposition}
\label{pr8} For each $\{X_{n} \}_{n=1}^{\infty}$ in $C_{b}(\Omega)$ such that
$X_{n}\downarrow0$ \ on $\Omega$, we have $\mathbb{\hat{E}} [X_{n}%
]\downarrow0$.
\end{proposition}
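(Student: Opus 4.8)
The plan is to exploit the robust representation $\mathbb{\hat{E}}[X]=\sup_{P\in\mathcal{P}}E_P[X]$ valid on $C_b(\Omega)$, together with the weak compactness of $\mathcal{P}$ quoted from \cite{DHP}, so as to reduce the statement to ordinary dominated convergence under a single probability measure. First observe that, by monotonicity and positive homogeneity of $\mathbb{\hat{E}}$, the sequence $(\mathbb{\hat{E}}[X_{n}])_{n}$ is nonincreasing and bounded below by $0$; hence it converges to some $\delta\geq0$, and it suffices to show $\delta=0$. Suppose, for contradiction, that $\delta>0$.

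For each $n$ pick $P_{n}\in\mathcal{P}$ with $E_{P_{n}}[X_{n}]\geq\mathbb{\hat{E}}[X_{n}]-\tfrac{1}{n}\geq\delta-\tfrac{1}{n}$. Since $\Omega=C_{0}^{d}(\mathbb{R}^{+})$ is a Polish space, the weak topology on probability measures is metrizable, so the weakly compact family $\mathcal{P}$ is weakly sequentially compact; extract a subsequence $(P_{n_{k}})_{k}$ converging weakly to some $P\in\mathcal{P}$. Fix $m\in\mathbb{N}$. For every $k$ with $n_{k}\geq m$ we have $X_{n_{k}}\leq X_{m}$ (the $X_{n}$ decrease), hence $E_{P_{n_{k}}}[X_{n_{k}}]\leq E_{P_{n_{k}}}[X_{m}]$. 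Letting $k\to\infty$ and using $X_{m}\in C_{b}(\Omega)$ together with weak convergence $P_{n_{k}}\to P$,
\[
\delta\leq\liminf_{k\to\infty}E_{P_{n_{k}}}[X_{n_{k}}]\leq\lim_{k\to\infty}E_{P_{n_{k}}}[X_{m}]=E_{P}[X_{m}].
\]
This holds for every $m$. But $0\leq X_{m}\downarrow0$ pointwise on $\Omega$, hence $P$-a.s., and $0\leq X_{m}\leq X_{1}$ with $X_{1}$ bounded and therefore $P$-integrable; so the dominated convergence theorem for the single measure $P$ gives $E_{P}[X_{m}]\downarrow0$ as $m\to\infty$. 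Thus $\delta\leq0$, contradicting $\delta>0$. Hence $\delta=0$, i.e. $\mathbb{\hat{E}}[X_{n}]\downarrow0$.

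The only step requiring genuine care is the extraction of a weakly convergent subsequence whose limit still belongs to $\mathcal{P}$: this is exactly where the weak (sequential) compactness of $\mathcal{P}$ is needed, and it is the crux of the argument, everything else being elementary facts about a decreasing sequence of bounded continuous functions integrated against a fixed probability measure. An essentially equivalent route phrases the same idea in terms of the capacity $\hat{c}$ (continuity from above on compact sets plus tightness of $\mathcal{P}$); I would present the version above since it uses directly the representation and weak compactness already recorded in the preliminaries.
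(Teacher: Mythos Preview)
Your argument is correct. The paper itself does not supply a proof of this proposition; it merely records it as a fact taken from \cite{DHP}, so there is no ``paper's own proof'' to compare against line by line. Within the logical order of this paper your route is entirely legitimate: the representation $\mathbb{\hat{E}}[X]=\sup_{P\in\mathcal{P}}E_{P}[X]$ on $C_{b}(\Omega)$ and the weak compactness of $\mathcal{P}$ are both stated \emph{before} Proposition~\ref{pr8}, so you are free to invoke them, and every step (near-maximizer selection, metrizability of the weak topology on $\mathcal{M}$ over the Polish space $\Omega$, extraction of a weakly convergent subsequence with limit in the closed set $\mathcal{P}$, the sandwich $E_{P_{n_k}}[X_{n_k}]\le E_{P_{n_k}}[X_m]$, passage to the limit, and dominated convergence under the limit measure) is sound.

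One contextual remark worth making: in \cite{DHP} the logical flow typically runs the other way---the Dini-type continuity of $\mathbb{\hat E}$ along decreasing sequences in $C_b(\Omega)$ is established first (via tightness arguments intrinsic to the $G$-expectation construction) and is then \emph{used} to produce the representing family $\mathcal{P}$ and its weak compactness. So your proof, while valid inside the present paper's expository order, would be circular if transplanted into the original source. If you want an argument that is self-contained relative to \cite{DHP}, the alternative you allude to at the end---tightness of $\mathcal{P}$ plus Dini's theorem on compacta---is the standard independent route: given $\varepsilon>0$, choose a compact $K\subset\Omega$ with $\hat c(K^{c})<\varepsilon$, apply the classical Dini theorem to $X_n|_K$ to get uniform convergence on $K$, and bound $\mathbb{\hat E}[X_n]\le \sup_K X_n + \|X_1\|_\infty\,\hat c(K^c)$.
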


\begin{proposition}
\label{pr9} We have

\begin{enumerate}
\item For each $p\geq1$, $\mathbb{L}^{p}$ is a Banach space under the norm
$\left \Vert X\right \Vert _{p}:=\left(  \mathbb{\hat{E}}[|X|^{p}]\right)
^{\frac{1}{p}}$.

\item $\mathbb{L}_{\ast}^{p}$ is the completion of $B_{b}(\Omega)$ under the
Banach norm $\mathbb{\hat{E}}[|X|^{p}]^{1/p}$.

\item $\mathbb{L}_{G}^{p}$ is the completion of $C_{b}(\Omega)$.
\end{enumerate}
\end{proposition}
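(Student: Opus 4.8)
The plan is to reduce everything to the representation $\mathbb{\hat{E}}[\cdot]=\sup_{P\in\mathcal{P}}E_{P}[\cdot]$, which by the very definition of $\mathcal{L}^{p}$ is valid for every nonnegative $X$ with $\mathbb{\hat{E}}[|X|^{p}]<\infty$, and then to transfer the classical $L^{p}(P)$ arguments to $\mathbb{\hat{E}}$ by interchanging a supremum over $\mathcal{P}$ with limits. Two facts set this up. \emph{(i) Minkowski.} For $X,Y\in\mathcal{L}^{p}$ and each $P\in\mathcal{P}$ one has $E_{P}[|X+Y|^{p}]^{1/p}\le E_{P}[|X|^{p}]^{1/p}+E_{P}[|Y|^{p}]^{1/p}\le\|X\|_{p}+\|Y\|_{p}$; taking the supremum over $P$ gives $X+Y\in\mathcal{L}^{p}$ and $\|X+Y\|_{p}\le\|X\|_{p}+\|Y\|_{p}$. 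With the positive homogeneity of $\mathbb{\hat{E}}$ this makes $\|\cdot\|_{p}$ a seminorm on $\mathcal{L}^{p}$, and since $\|X\|_{p}=0\iff\mathbb{\hat{E}}[|X|^{p}]=0\iff X\in\mathcal{N}^{p}=\mathcal{N}$ it descends to a genuine norm on $\mathbb{L}^{p}=\mathcal{L}^{p}/\mathcal{N}$. \emph{(ii) Monotone convergence.} If $0\le Y_{m}\uparrow Y$ q.s., then $\mathbb{\hat{E}}[Y_{m}]\uparrow\mathbb{\hat{E}}[Y]$ in $[0,\infty]$: one direction is monotonicity, and for the other, classical monotone convergence under each fixed $P$ gives $E_{P}[Y]=\lim_{m}E_{P}[Y_{m}]\le\sup_{m}\mathbb{\hat{E}}[Y_{m}]$, so taking the supremum over $P$ yields $\mathbb{\hat{E}}[Y]\le\sup_{m}\mathbb{\hat{E}}[Y_{m}]$. (Here one uses that every polar set is $P$-null for all $P\in\mathcal{P}$, since $\hat{c}=\sup_{P}P(\cdot)$, so q.s.\ relations are $P$-a.s.\ relations.)

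\emph{Part 1} is then the Riesz--Fischer argument. Given a Cauchy sequence $\{X_{n}\}\subset\mathbb{L}^{p}$, extract a subsequence with $\|X_{n_{k+1}}-X_{n_{k}}\|_{p}\le 2^{-k}$ and put $Y:=\sum_{k\ge1}|X_{n_{k+1}}-X_{n_{k}}|$. Its partial sums have $\|\cdot\|_{p}\le1$ by (i), so (ii) applied to their $p$-th powers gives $\mathbb{\hat{E}}[Y^{p}]\le1$; thus $Y\in\mathcal{L}^{p}$ and $Y<\infty$ q.s. Hence the telescoping series converges q.s.\ to some $X$ (set $X:=0$ on the measurable polar set $\{Y=\infty\}$), so $X\in L^{0}(\Omega)$, and $|X-X_{n_{1}}|\le Y$ gives $X\in\mathcal{L}^{p}$. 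Finally $|X-X_{n_{K}}|\le\sum_{k\ge K}|X_{n_{k+1}}-X_{n_{k}}|$ together with (i)--(ii) gives $\|X-X_{n_{K}}\|_{p}\le 2^{-K+1}\to0$, and a Cauchy sequence with a convergent subsequence converges, so $X_{n}\to X$. A useful byproduct: $\|\cdot\|_{p}$-convergence forces q.s.\ convergence along a subsequence, and Chebyshev's inequality for $\hat{c}$ gives the quantitative bound $\hat{c}(|X-X_{n_{K}}|>\delta)\le\delta^{-p}\mathbb{\hat{E}}[|X-X_{n_{K}}|^{p}]$.

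\emph{Parts 2 and 3.} Since $\mathbb{L}^{p}$ is a Banach space and $B_{b}(\Omega)$ maps into $\mathcal{L}^{p}$, the $\|\cdot\|_{p}$-closure of the image of $B_{b}(\Omega)$ in $\mathbb{L}^{p}$ is complete and contains $B_{b}(\Omega)$ as a dense subspace, hence it \emph{is} the completion of $(B_{b}(\Omega),\|\cdot\|_{p})$; this closure is $\mathbb{L}_{\ast}^{p}$. Likewise $\mathbb{L}_{G}^{p}$ is the $\|\cdot\|_{p}$-closure of $C_{b}(\Omega)$ and so is its completion. The remaining content is to match this last closure with the characterization $\mathbb{L}_{G}^{p}=\{X\in\mathbb{L}^{0}(\Omega):\sup_{P}E_{P}[|X|^{p}]<\infty,\ X\ \hat{c}\text{-q.s.\ continuous}\}$ recalled earlier. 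For ``$\subseteq$'', a $\|\cdot\|_{p}$-limit of $C_{b}(\Omega)$-functions has, along a geometrically fast subsequence, uniform convergence off open sets of arbitrarily small capacity --- the sets $\{|X_{n_{k+1}}-X_{n_{k}}|>2^{-k}\}$ are open because the increments are continuous, and their capacities are summable by the Chebyshev bound once $\|X_{n_{k+1}}-X_{n_{k}}\|_{p}$ is taken small enough --- hence it admits a q.s.-continuous version; finiteness of $\sup_{P}E_{P}[|X|^{p}]$ is immediate from $\|X\|_{p}\le\|X-X_{n}\|_{p}+\|X_{n}\|_{p}$. For ``$\supseteq$'', one truncates a q.s.-continuous finite-norm $X$ to $X^{N}:=(-N)\vee(X\wedge N)$, approximates $X^{N}$ by $C_{b}(\Omega)$-functions off a small-capacity open set, and controls both errors using (ii), dominated convergence for $\mathbb{\hat{E}}$, and Proposition~\ref{pr8}.

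The step I expect to be the main obstacle is precisely this identification in Part 3 --- the quasi-continuity bookkeeping: keeping the exceptional open sets uniformly small in $\hat{c}$, and showing that truncation composed with continuous approximation actually converges in $\|\cdot\|_{p}$ (which is where a uniform-integrability-type input, ultimately traceable to Proposition~\ref{pr8} and the weak compactness of $\mathcal{P}$, enters). By contrast, Minkowski, monotone convergence and the Riesz--Fischer completeness argument are routine once the representation $\mathbb{\hat{E}}=\sup_{P\in\mathcal{P}}E_{P}$ is available.
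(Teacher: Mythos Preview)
The paper does not prove this proposition; it records it (together with Proposition~\ref{pr8}) as a result taken from \cite{DHP}, so there is no in-paper argument to compare against. Your sketch is correct and coincides with the approach in that reference: Minkowski and monotone convergence obtained from the representation $\mathbb{\hat{E}}=\sup_{P\in\mathcal{P}}E_{P}$, completeness via the Riesz--Fischer fast-subsequence construction, and the identification of $\mathbb{L}_{G}^{p}$ with the q.s.-continuous finite-moment elements through Chebyshev-in-capacity combined with the tightness input of Proposition~\ref{pr8} and the weak compactness of $\mathcal{P}$. The step you flag as the main obstacle --- the ``$\supseteq$'' inclusion in Part~3 --- is indeed where \cite{DHP} does the substantive work.
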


The following proposition is obvious.

\begin{proposition}
We have

\begin{enumerate}
\item $\mathbb{L}_{\ast}^{p}\subset \mathbb{L}^{p}\subset \mathbb{L}_{\ast}%
^{q}\subset \mathbb{L}^{q}$, $0<p\leq q\leq \infty$;

\item $\left \Vert X\right \Vert _{p}\uparrow \left \Vert X\right \Vert _{\infty}$,
for each $X\in \mathbb{L}^{\infty}$;

\item $p,q>1$, $\frac{1}{p}+\frac{1}{q}=1$. Then $X\in \mathbb{L}^{p}$ and
$Y\in \mathbb{L}^{q}$ implies
\[
XY\in \mathbb{L}^{1}\text{ and }\mathbb{E}[|XY|]\leq \left(  \mathbb{E}%
[|X|^{p}]\right)  ^{\frac{1}{p}}\left(  \mathbb{E}[|Y|^{q}]\right)  ^{\frac
{1}{q}}.
\]
\newline


\end{enumerate}
\end{proposition}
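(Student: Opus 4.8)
\noindent The plan is to obtain all three items by reducing to the corresponding classical facts on each probability space $(\Omega,\mathcal{B}(\Omega),P)$, $P\in\mathcal{P}$, and then passing to the supremum, using in addition only that $\mathbb{L}_{\ast}^{r}$ is by construction the closure of $B_{b}(\Omega)$ inside the complete space $\mathbb{L}^{r}$ (Proposition \ref{pr9}). The single computation that does all the work is the norm comparison: for $0<p\leq q$ and $X\in L^{0}(\Omega)$ one has $\left\Vert X\right\Vert _{p}\leq\left\Vert X\right\Vert _{q}$. First I would prove this by applying Jensen's inequality to the convex map $u\mapsto u^{q/p}$ on each $(\Omega,\mathcal{B}(\Omega),P)$, getting $E_{P}[|X|^{p}]\leq(E_{P}[|X|^{q}])^{p/q}$, and then taking $\sup_{P\in\mathcal{P}}$; since $t\mapsto t^{p/q}$ is nondecreasing the exponent passes through the supremum, yielding $\mathbb{\hat{E}}[|X|^{p}]\leq\mathbb{\hat{E}}[|X|^{q}]^{p/q}$ (for $q=\infty$ one simply uses $|X|\leq\left\Vert X\right\Vert _{\infty}$ q.s.).

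For item 1, I would first note that every bounded Borel function has finite moments of all orders, so $B_{b}(\Omega)\subset\mathcal{L}^{r}$; since $\mathbb{L}^{r}$ is a Banach space, $\mathbb{L}_{\ast}^{r}$ is exactly the $\left\Vert\cdot\right\Vert _{r}$-closure of $B_{b}(\Omega)$ taken inside $\mathbb{L}^{r}$, which already gives $\mathbb{L}_{\ast}^{r}\subset\mathbb{L}^{r}$. The norm comparison then gives $\mathbb{L}^{q}\subset\mathbb{L}^{p}$ for $p\leq q$ (a finite $q$-norm forces a finite $p$-norm). Finally, to place a given $X$ with $\mathbb{\hat{E}}[|X|^{q}]<\infty$ into $\mathbb{L}_{\ast}^{p}$ for $p<q$, I would approximate by the truncations $X_{n}:=(-n)\vee X\wedge n\in B_{b}(\Omega)$ and use the elementary pointwise estimate $|X-X_{n}|^{p}\leq n^{p-q}|X|^{q}$ on $\{|X|>n\}$ (and $=0$ elsewhere), so that $\left\Vert X-X_{n}\right\Vert _{p}^{p}\leq n^{p-q}\mathbb{\hat{E}}[|X|^{q}]\to 0$; hence $X$ lies in the $\left\Vert\cdot\right\Vert _{p}$-closure of $B_{b}(\Omega)$. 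Concatenating these three observations gives the asserted chain of inclusions.

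For item 2, the norm comparison already shows that $p\mapsto\left\Vert X\right\Vert _{p}$ is nondecreasing and bounded above by $\left\Vert X\right\Vert _{\infty}$, so the limit exists and is $\leq\left\Vert X\right\Vert _{\infty}$. For the reverse direction I would fix $\varepsilon>0$, set $A:=\{|X|>\left\Vert X\right\Vert _{\infty}-\varepsilon\}$, note that $c:=\hat{c}(A)=\mathbb{\hat{E}}[\mathbf{1}_{A}]>0$ by the definition of the quasi-sure essential supremum, and use monotonicity of $\mathbb{\hat{E}}$ to get $\mathbb{\hat{E}}[|X|^{p}]\geq(\left\Vert X\right\Vert _{\infty}-\varepsilon)^{p}c$, whence $\left\Vert X\right\Vert _{p}\geq(\left\Vert X\right\Vert _{\infty}-\varepsilon)c^{1/p}\to\left\Vert X\right\Vert _{\infty}-\varepsilon$; letting $\varepsilon\downarrow 0$ closes it. Item 3 is Hölder's inequality applied on each $(\Omega,\mathcal{B}(\Omega),P)$: $E_{P}[|XY|]\leq(E_{P}[|X|^{p}])^{1/p}(E_{P}[|Y|^{q}])^{1/q}\leq\left\Vert X\right\Vert _{p}\left\Vert Y\right\Vert _{q}$ uniformly in $P$, and taking $\sup_{P}$ gives $\mathbb{\hat{E}}[|XY|]\leq\left\Vert X\right\Vert _{p}\left\Vert Y\right\Vert _{q}<\infty$, so the Borel function $XY$ lies in $\mathbb{L}^{1}$.

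The argument is soft throughout and I do not expect a serious obstacle; the one spot where the naive computation yields nothing is the degenerate endpoint $p=q$ of item 1, that is, the genuine density of $B_{b}(\Omega)$ in the full space $\mathbb{L}^{p}$: the truncation bound above degenerates, and one would instead need the downward continuity of $\mathbb{\hat{E}}$ along dominated decreasing sequences (the content behind Proposition \ref{pr8} together with the regularity of $\hat{c}$). Since the statement only asserts $\mathbb{L}_{\ast}^{p}\subset\mathbb{L}^{p}$ at that endpoint, even this is not needed, which is why the proposition is indeed routine.
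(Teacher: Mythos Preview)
The paper offers no proof of this proposition at all; it is introduced with ``The following proposition is obvious'' and left unproved. Your argument is correct and is exactly the standard route one would take: reduce each item to the classical fact on every $P\in\mathcal{P}$ and pass to the supremum, using truncations for the density statement in item~1.

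One point worth flagging is that you have (correctly) proved the chain
\[
\mathbb{L}_{\ast}^{q}\subset\mathbb{L}^{q}\subset\mathbb{L}_{\ast}^{p}\subset\mathbb{L}^{p},\qquad 0<p\leq q\leq\infty,
\]
which is what the norm comparison $\Vert X\Vert_{p}\leq\Vert X\Vert_{q}$ actually yields, whereas the proposition as printed has the roles of $p$ and $q$ reversed in the inclusions. This is evidently a typographical slip in the paper (the middle inclusion $\mathbb{L}^{p}\subset\mathbb{L}_{\ast}^{q}$ for $p\leq q$ is false in general), and your silent correction is the intended statement; you might want to say so explicitly rather than leave it implicit.
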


\begin{proposition}
For a given $p\in(0,+\infty]$, let $\{X_{n}\}_{n=1}^{\infty}$ be a sequence in
$\mathbb{L}^{p}$ which converges to $X$ in $\mathbb{L}^{p}$. Then there exists
a subsequence $(X_{n_{k}})$ which converges to $X$ quasi-surely in the sense
that it converges to $X$ outside a polar set.
\end{proposition}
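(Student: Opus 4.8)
The plan is to transpose the classical Riesz--Fischer argument (that $L^{p}$-convergence forces pointwise convergence along a subsequence) to the capacity setting. The two tools needed beyond standard measure theory are a Chebyshev inequality and a Borel--Cantelli lemma for the Choquet capacity, and both follow at once from the representations $\hat{c}(A)=\sup_{P\in\mathcal{P}}P(A)$ and $\mathbb{\hat{E}}[Z]=\sup_{P\in\mathcal{P}}E_{P}[Z]$ (the latter being the defining property of $\mathcal{L}^{1}$, applied to $Z=|X_{n}-X_{m}|^{p}\in\mathcal{L}^{1}$): for a Borel function $Z\ge 0$ and $\lambda>0$,
\[
\hat{c}(\{Z>\lambda\})=\sup_{P}P(Z>\lambda)\le\lambda^{-1}\sup_{P}E_{P}[Z]=\lambda^{-1}\mathbb{\hat{E}}[Z],
\]
and for Borel sets $A_{1},A_{2},\dots$ countable subadditivity of each $P$ gives $\hat{c}(\bigcup_{k}A_{k})\le\sum_{k}\hat{c}(A_{k})$.

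For $p<\infty$, convergence in $\mathbb{L}^{p}$ means $\mathbb{\hat{E}}[|X_{n}-X|^{p}]\to 0$, and the sequence is then Cauchy in this sense (use the triangle inequality for $p\ge 1$, and $|a-b|^{p}\le|a|^{p}+|b|^{p}$ plus sub-additivity of $\mathbb{\hat{E}}$ for $p<1$), so I can extract $n_{1}<n_{2}<\cdots$ with $\mathbb{\hat{E}}[|X_{n_{k+1}}-X_{n_{k}}|^{p}]\le 2^{-k(p+1)}$. With $Y_{k}:=X_{n_{k+1}}-X_{n_{k}}$ and $A_{k}:=\{|Y_{k}|>2^{-k}\}$, the Chebyshev estimate gives $\hat{c}(A_{k})\le 2^{kp}\,\mathbb{\hat{E}}[|Y_{k}|^{p}]\le 2^{-k}$, hence $\sum_{k}\hat{c}(A_{k})<\infty$; by countable subadditivity, $A:=\bigcap_{m}\bigcup_{k\ge m}A_{k}$ obeys $\hat{c}(A)\le\sum_{k\ge m}\hat{c}(A_{k})$ for every $m$, so $\hat{c}(A)=0$ and $A$ is polar. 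For $\omega\notin A$ we have $|Y_{k}(\omega)|\le 2^{-k}$ for all large $k$, so $\sum_{k}Y_{k}(\omega)$ converges absolutely and the telescoping partial sums $X_{n_{k}}(\omega)=X_{n_{1}}(\omega)+\sum_{j<k}Y_{j}(\omega)$ converge to a limit $\tilde{X}(\omega)$; putting $\tilde{X}:=0$ on the Borel set $A$ makes $\tilde{X}\in L^{0}(\Omega)$ with $X_{n_{k}}\to\tilde{X}$ q.s.

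The step I expect to be the real obstacle is identifying $\tilde{X}$ with $X$, since the $\mathbb{L}^{p}$-limit and the q.s.-limit a priori live in different worlds. I would bridge them through the generating measures: for each fixed $P\in\mathcal{P}$, $E_{P}[|X_{n_{k}}-X|^{p}]\le\mathbb{\hat{E}}[|X_{n_{k}}-X|^{p}]\to 0$, so $X_{n_{k}}\to X$ in $L^{p}(P)$ and hence $P$-a.s. along a further subsequence by the classical theorem; but also $X_{n_{k}}\to\tilde{X}$ $P$-a.s. because $P(A)\le\hat{c}(A)=0$. Thus $\tilde{X}=X$ $P$-a.s., and taking the supremum over $P\in\mathcal{P}$ yields $\hat{c}(\{\tilde{X}\neq X\})=0$, so $X_{n_{k}}\to X$ outside the polar set $A\cup\{\tilde{X}\neq X\}$. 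The case $p=\infty$ is handled separately and more simply: from $\mathbb{\hat{E}}[|Y|^{q}]\ge\varepsilon^{q}\hat{c}(\{|Y|>\varepsilon\})$ and $\|Y\|_{q}\uparrow\|Y\|_{\infty}$ one gets $\|Y\|_{\infty}<\varepsilon\Rightarrow\hat{c}(\{|Y|>\varepsilon\})=0$; choosing $n_{k}$ with $\|X_{n_{k}}-X\|_{\infty}\le 2^{-k-1}$ then makes each $\{|X_{n_{k}}-X|>2^{-k}\}$ polar, hence so is their union, and $X_{n_{k}}(\omega)\to X(\omega)$ for every $\omega$ outside it.
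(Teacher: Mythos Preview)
The paper does not supply its own proof of this proposition; it is listed in the Preliminaries as a known fact (together with the surrounding propositions attributed to \cite{DHP}), so there is no argument in the text to compare against. Your proof is correct: the Chebyshev bound $\hat{c}(\{Z>\lambda\})\le\lambda^{-1}\mathbb{\hat{E}}[Z]$ and the countable subadditivity $\hat{c}(\bigcup_{k}A_{k})\le\sum_{k}\hat{c}(A_{k})$ both follow immediately from $\hat{c}=\sup_{P}P$, and with those in hand the classical Borel--Cantelli extraction goes through unchanged. The identification step $\tilde{X}=X$ via each $P\in\mathcal{P}$ is also fine.

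One simplification worth noting: since here the $\mathbb{L}^{p}$-limit $X$ is already given, you can skip the Cauchy/telescoping detour entirely. Just pick $n_{k}$ with $\mathbb{\hat{E}}[|X_{n_{k}}-X|^{p}]\le 2^{-k(p+1)}$ and apply your Chebyshev/Borel--Cantelli machinery directly to $A_{k}:=\{|X_{n_{k}}-X|>2^{-k}\}$; then outside the polar set $\limsup_{k}A_{k}$ one has $|X_{n_{k}}-X|\le 2^{-k}$ for all large $k$, hence $X_{n_{k}}\to X$ q.s.\ with no need to construct $\tilde{X}$ or to match it with $X$ afterward. The longer route you wrote is the one used when proving completeness of $L^{p}$ (where the limit is not yet known), but here it is unnecessary. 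Your separate $p=\infty$ argument is also correct, though by the same token the direct version already covers it once one observes $\|Y\|_{\infty}<\varepsilon\Rightarrow\hat{c}(\{|Y|>\varepsilon\})=0$.
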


We also have

\begin{proposition}
\label{Prop5}For each $p>0$,%
\[
\mathbb{L}_{\ast}^{p}=\{X\in \mathbb{L}^{p}:\lim_{n\rightarrow \infty}%
\mathbb{E}[|X|^{p}\mathbf{1}_{\{|X|>n\}}]=0\}.
\]

\end{proposition}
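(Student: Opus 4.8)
The claim to prove is the characterization
\[
\mathbb{L}_{\ast}^{p}=\{X\in \mathbb{L}^{p}:\lim_{n\rightarrow \infty}\mathbb{E}[|X|^{p}\mathbf{1}_{\{|X|>n\}}]=0\}.
\]
Write $\mathbb{L}_{u}^{p}$ for the right-hand set (the "uniformly $p$-integrable" elements of $\mathbb{L}^{p}$). The plan is to prove the two inclusions separately, using the fact from Proposition~\ref{pr9} that $\mathbb{L}_{\ast}^{p}$ is exactly the closure of $B_{b}(\Omega)$ in the $\left \Vert \cdot \right \Vert _{p}$-norm, together with the sublinearity and monotonicity of $\mathbb{\hat{E}}$.

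For the inclusion $\mathbb{L}_{\ast}^{p}\subset \mathbb{L}_{u}^{p}$, first I would check that every bounded element trivially lies in $\mathbb{L}_{u}^{p}$: if $|X|\leq M$ then $\{|X|>n\}=\emptyset$ for $n\geq M$, so the limit is $0$. Then I would show $\mathbb{L}_{u}^{p}$ is closed under the $\left \Vert \cdot \right \Vert _{p}$-norm, which gives the inclusion since $\mathbb{L}_{\ast}^{p}$ is the closure of $B_{b}(\Omega)$. To show closedness, take $X_{k}\to X$ in $\mathbb{L}^{p}$ with each $X_{k}\in \mathbb{L}_{u}^{p}$. The truncation estimate I would use is: for any $a,b\geq 0$, $\mathbf{1}_{\{|X|>n\}}\leq \mathbf{1}_{\{|X-X_{k}|>n/2\}}+\mathbf{1}_{\{|X_{k}|>n/2\}}$ (q.s.), hence by sub-additivity and positive homogeneity of $\mathbb{\hat{E}}$,
\[
\mathbb{E}[|X|^{p}\mathbf{1}_{\{|X|>n\}}]\leq C_{p}\left(\mathbb{E}[|X-X_{k}|^{p}]+\mathbb{E}[|X_{k}|^{p}\mathbf{1}_{\{|X_{k}|>n/2\}}]\right)+ (\text{cross term}),
\]
where $C_{p}$ depends only on $p$; more carefully I would split $|X|^{p}\leq C_{p}(|X-X_{k}|^{p}+|X_{k}|^{p})$ first and handle the two pieces. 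Fixing $k$ large to kill the $\mathbb{E}[|X-X_{k}|^{p}]$ term and then sending $n\to \infty$ to kill the uniform-integrability term of $X_{k}$ yields $\limsup_{n}\mathbb{E}[|X|^{p}\mathbf{1}_{\{|X|>n\}}]\leq \varepsilon$ for arbitrary $\varepsilon$.

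For the reverse inclusion $\mathbb{L}_{u}^{p}\subset \mathbb{L}_{\ast}^{p}$, given $X\in \mathbb{L}^{p}$ with $\mathbb{E}[|X|^{p}\mathbf{1}_{\{|X|>n\}}]\to 0$, I would approximate $X$ by its truncations. The natural choice is $X_{n}:=(X\wedge n)\vee(-n)$, which is bounded and Borel-measurable, hence $X_{n}\in B_{b}(\Omega)$. Then $|X-X_{n}|^{p}\leq 2^{p}|X|^{p}\mathbf{1}_{\{|X|>n\}}$ pointwise (q.s.), so $\mathbb{E}[|X-X_{n}|^{p}]\leq 2^{p}\mathbb{E}[|X|^{p}\mathbf{1}_{\{|X|>n\}}]\to 0$ by hypothesis, and therefore $X\in \overline{B_{b}(\Omega)}=\mathbb{L}_{\ast}^{p}$.

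I expect the main subtlety to be not a deep obstacle but a bookkeeping point: ensuring that all the elementary manipulations with indicator functions and the inequality $|a+b|^{p}\leq C_{p}(|a|^{p}+|b|^{p})$ are legitimate at the level of $\mathbb{\hat{E}}$, i.e. that the relevant random variables (products with indicators, truncations) genuinely lie in $\mathbb{L}^{p}$ (or at least $\mathbb{L}^{0}$ with finite $\mathbb{\hat{E}}[|\cdot|^{p}]$) so that monotonicity and sub-additivity of the sublinear expectation apply, and that inequalities holding q.s.\ suffice because polar sets are $\hat{c}$-null. One should also note that when $p<1$ the constant is $C_{p}=1$ and when $p\geq 1$ one may take $C_{p}=2^{p-1}$; in either case the argument goes through verbatim. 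The only genuinely delicate step is the closedness argument in the first inclusion, where the cross term from expanding $(|X-X_{k}|+|X_{k}|)^{p}$ must be controlled — this is handled cleanly by using the $C_{p}$-inequality to avoid cross terms altogether rather than expanding a binomial.
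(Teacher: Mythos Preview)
The paper does not actually supply a proof of this proposition; it is stated without argument among the preliminary results imported from \cite{DHP}. So there is nothing to compare against at the level of technique.

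Your argument is correct. Both inclusions go through as you describe. One small simplification for the first inclusion: you do not need the full statement that $\mathbb{L}_{u}^{p}$ is closed in $\mathbb{L}^{p}$. Since $\mathbb{L}_{\ast}^{p}$ is the closure of $B_{b}(\Omega)$, it is enough to show that any $\left\Vert\cdot\right\Vert_{p}$-limit of \emph{bounded} functions lies in $\mathbb{L}_{u}^{p}$. With $X_{k}\in B_{b}(\Omega)$, say $|X_{k}|\leq M_{k}$, the ``cross term'' you flag disappears cleanly via Chebyshev:
\[
\mathbb{E}\bigl[|X_{k}|^{p}\mathbf{1}_{\{|X|>n\}}\bigr]\leq M_{k}^{p}\,\mathbb{E}\bigl[\mathbf{1}_{\{|X|>n\}}\bigr]\leq \frac{M_{k}^{p}}{n^{p}}\,\mathbb{E}[|X|^{p}],
\]
which tends to $0$ as $n\to\infty$ for fixed $k$. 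This avoids the indicator-splitting $\mathbf{1}_{\{|X|>n\}}\leq \mathbf{1}_{\{|X-X_{k}|>n/2\}}+\mathbf{1}_{\{|X_{k}|>n/2\}}$ altogether. For the reverse inclusion your truncation argument is the standard one; note that in fact $|X-X_{n}|=(|X|-n)\mathbf{1}_{\{|X|>n\}}\leq |X|\mathbf{1}_{\{|X|>n\}}$ pointwise, so the constant $2^{p}$ is not needed, but this is cosmetic.
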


We introduce the following properties. They are important in this paper:

\begin{proposition}
\label{p1} For each $0\leq t<T$, $\xi \in \mathbb{L}^{2}(\Omega_{t})$, we have%
\[
\mathbb{\hat{E}}[\xi(B_{T}-B_{t})]=0.
\]

\end{proposition}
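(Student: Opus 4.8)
The plan is to reduce the statement to the analogous fact for the underlying family $\mathcal{P}$ of probability measures, under each of which $B$ is a (classical) martingale with respect to its own filtration. First I would treat the case $\xi \in C_{b}(\Omega_{t})$ (or more generally $\xi$ a cylinder function of the form $\varphi(B_{t_{1}},\dots,B_{t_{n}})$ with $t_{n}\le t$ and $\varphi\in C_{b,Lip}$). For such $\xi$ I would use the tower property of the $G$-conditional expectation together with the independence of the increment $B_{T}-B_{t}$ from $\mathcal{F}_{t}$: writing $\mathbb{\hat{E}}[\xi(B_{T}-B_{t})]=\mathbb{\hat{E}}\bigl[\mathbb{\hat{E}}[\xi(B_{T}-B_{t})\,|\,\Omega_{t}]\bigr]$, and using that $\xi$ is $\Omega_{t}$-measurable and $\mathbb{\hat{E}}[\cdot|\Omega_{t}]$ is positively homogeneous in bounded nonnegative factors, I would split $\xi=\xi^{+}-\xi^{-}$ and pull out each piece to get $\mathbb{\hat{E}}[\xi(B_{T}-B_{t})\,|\,\Omega_{t}]=\xi^{+}\,\mathbb{\hat{E}}[(B_{T}-B_{t})\,|\,\Omega_{t}]-\xi^{-}(\cdots)$, which by independence and (iv) in Definition \ref{Def-3} equals $\xi^{+}\cdot 0-\xi^{-}\cdot 0=0$. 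Hence $\mathbb{\hat{E}}[\xi(B_{T}-B_{t})]=0$ for cylinder $\xi$.

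The second step is a density/approximation argument. Given a general $\xi\in\mathbb{L}^{2}(\Omega_{t})$, I would approximate it by a sequence $\xi_{n}$ of bounded cylinder functions converging to $\xi$ in $\mathbb{L}^{2}$; this is legitimate because $\mathbb{L}_{G}^{2}(\Omega_{t})$ is the completion of $C_{b}(\Omega_{t})$, and more care is needed to handle all of $\mathbb{L}^{2}(\Omega_{t})$ rather than just $\mathbb{L}_{G}^{2}$. Using $|\mathbb{\hat{E}}[\xi(B_{T}-B_{t})]-\mathbb{\hat{E}}[\xi_{n}(B_{T}-B_{t})]|\le \mathbb{\hat{E}}[|\xi-\xi_{n}|\,|B_{T}-B_{t}|]$ and the Cauchy--Schwarz-type inequality for $\mathbb{\hat{E}}$ (item 3 of the preceding proposition), this is bounded by $\|\xi-\xi_{n}\|_{2}\,\bigl(\mathbb{\hat{E}}[|B_{T}-B_{t}|^{2}]\bigr)^{1/2}$; since $\mathbb{\hat{E}}[|B_{T}-B_{t}|^{2}]=\mathbb{\hat{E}}[|B_{T-t}|^{2}]=2G(I)(T-t)<\infty$, the right side tends to $0$, and the result follows by passing to the limit. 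An alternative, perhaps cleaner route for the approximation step is to invoke the representation $\mathbb{\hat{E}}[Y]=\sup_{P\in\mathcal{P}}E_{P}[Y]$: for each $P\in\mathcal{P}$, $B$ is a $P$-martingale so $E_{P}[\xi(B_{T}-B_{t})]=0$ (using that $\xi$ is $\mathcal{F}_{t}$-measurable, hence $\xi(B_{T}-B_{t})$ has vanishing $P$-expectation), and then $\mathbb{\hat{E}}[\xi(B_{T}-B_{t})]=\sup_{P}0=0$ while $\mathbb{\hat{E}}[-\xi(B_{T}-B_{t})]=\sup_{P}0=0$, so both one-sided expectations vanish.

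The main obstacle I anticipate is the approximation step in the first route: bounded cylinder functions are dense in $\mathbb{L}_{G}^{2}(\Omega_{t})$ but the proposition is stated for $\mathbb{L}^{2}(\Omega_{t})$, which is strictly larger, so one cannot simply approximate in $C_{b}$-norm. One must either restrict attention to $\mathbb{L}_{G}^{2}$ (which may be all that is actually used later) or exploit the capacity/quasi-sure framework and the $\mathcal{P}$-representation to handle the general case. If the $\mathcal{P}$-based argument is available, the obstacle essentially disappears, since the martingale property of $B$ under each $P\in\mathcal{P}$ transfers the identity directly; the only subtlety is then verifying that $\xi(B_{T}-B_{t})\in\mathbb{L}^{1}$ so that $\mathbb{\hat{E}}$ of it is well defined, which again follows from Cauchy--Schwarz together with $\xi\in\mathbb{L}^{2}(\Omega_{t})$ and $B_{T}-B_{t}\in\mathbb{L}^{2}$.
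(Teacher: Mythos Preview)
Your proposal is essentially correct, and your ``alternative route'' via the family $\mathcal{P}$ is in fact the route the paper takes---but with one concrete difference in how the extension from $C_{b}(\Omega_{t})$ to $\mathbb{L}^{2}(\Omega_{t})$ is carried out.

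For $\xi\in C_{b}(\Omega_{t})$ the paper simply records that $\mathbb{\hat{E}}[\xi(B_{T}-B_{t})]=0=-\mathbb{\hat{E}}[-\xi(B_{T}-B_{t})]$ (your tower/homogeneity argument justifies this) and uses the sandwich $-\mathbb{\hat{E}}[-Y]\le E_{P}[Y]\le\mathbb{\hat{E}}[Y]$ to conclude $E_{P}[\xi(B_{T}-B_{t})]=0$ for every $P\in\mathcal{P}$. For general $\xi\in\mathbb{L}^{2}(\Omega_{t})$ the paper does \emph{not} assume in advance that $B$ is a $P$-martingale; instead it fixes $P$, notes that $E_{P}[|\xi|^{2}]\le\mathbb{\hat{E}}[|\xi|^{2}]<\infty$, and then approximates $\xi$ by $\xi_{n}\in C_{b}(\Omega_{t})$ in the \emph{classical} space $L^{2}_{P}(\Omega_{t})$ (where $C_{b}$ is always dense). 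This gives $E_{P}[\xi(B_{T}-B_{t})]=\lim_{n}E_{P}[\xi_{n}(B_{T}-B_{t})]=0$ for each $P$, and hence $\mathbb{\hat{E}}[\xi(B_{T}-B_{t})]=\sup_{P}0=0$.

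So the paper neatly sidesteps exactly the obstacle you identified---that $C_{b}(\Omega_{t})$ is not dense in $\mathbb{L}^{2}(\Omega_{t})$---by doing the approximation $P$-by-$P$ in the linear space $L^{2}_{P}$ rather than in the nonlinear space $\mathbb{L}^{2}$. Your Route~2 reaches the same conclusion more quickly by invoking that $B$ is a $P$-martingale for each $P\in\mathcal{P}$; that is true, but it is an external fact not recorded in the paper's preliminaries, whereas the paper's argument is entirely self-contained and in effect \emph{proves} that martingale property along the way. One minor wrinkle in your write-up: the splitting $\mathbb{\hat{E}}[\xi X\,|\,\Omega_{t}]=\xi^{+}\mathbb{\hat{E}}[X\,|\,\Omega_{t}]-\xi^{-}(\cdots)$ is not valid for sublinear expectations in general; what makes it work here is the symmetry $\mathbb{\hat{E}}[B_{T}-B_{t}\,|\,\Omega_{t}]=-\mathbb{\hat{E}}[-(B_{T}-B_{t})\,|\,\Omega_{t}]=0$, which you should state explicitly.
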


\begin{proof}
Let $P\in \mathcal{P}$ be given. If $\xi \in C_{b}(\Omega_{t})$, then we have
\[
0=-\mathbb{\hat{E}}[-\xi(B_{T}-B_{t})]\leq E_{P}[\xi(B_{T}-B_{t}%
)]\leq \mathbb{\hat{E}}[\xi(B_{T}-B_{t})]=0.
\]
In the case when $\xi \in \mathbb{L}^{2}(\Omega_{t})$, we have $E_{P}[|\xi
|^{2}]\leq \mathbb{\hat{E}}[|\xi|^{2}]<\infty$. Since it is known that
$C_{b}(\Omega_{t})$ is dense in $L_{P}^{2}(\Omega_{t})$, we then can choose a
sequence $\{ \xi_{n}\}_{n=1}^{\infty}$ in $C_{b}(\Omega_{t})$ such that
$E_{P}[|\xi-\xi_{n}|^{2}]\rightarrow0$. Thus
\[
E_{P}[\xi(B_{T}-B_{t})]=\lim_{n\rightarrow \infty}E_{P}[\xi_{n}(B_{T}%
-B_{t})]=0.
\]
The proof is complete.
\end{proof}

From now on and throughout this paper we restrict ourselves to the situation
of $1$-dimensional $G$-Brownian motion case. In this case $G(a)$ becomes a
given sublinear and monotonic real valued function defined on $\mathbb{R}$.
$G$ can be written as%
\[
G(a)=\frac{1}{2}(\overline{\sigma}^{2}a^{+}-\underline{\sigma}^{2}a^{-}).
\]

\begin{proposition}
\label{p2} For each $0\leq t\leq T$, $\xi \in B_{b}({\Omega_{t}})$, we have
\begin{equation}
\hat{\mathbb{E}}[\xi^{2}(B_{T}-B_{t})^{2}-\overline{\sigma}^{2}\xi
^{2}(T-t)]\leq0. \label{PropP1}%
\end{equation}

\end{proposition}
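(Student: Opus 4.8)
The plan is to reduce the claimed inequality $\hat{\mathbb{E}}[\xi^2(B_T-B_t)^2 - \overline\sigma^2\xi^2(T-t)]\le 0$ to a statement about the conditional expectation of $(B_T-B_t)^2$, which is controlled by $\overline\sigma^2(T-t)$. First I would handle the case where $\xi$ is a simple $\mathcal{F}_t$-measurable function, say $\xi = \sum_{i} c_i \mathbf{1}_{A_i}$ with $A_i \in \mathcal F_t$ disjoint, or better, approximate by cylinder functions $\xi\in C_b(\Omega_t)$. The key observation is that $B_T-B_t$ is independent of $\Omega_t$ and $B_T-B_t \sim B_{T-t}$, so by the definition of independence,
\[
\hat{\mathbb{E}}[\xi^2(B_T-B_t)^2] = \hat{\mathbb{E}}\big[\,\hat{\mathbb{E}}[y^2 (B_T-B_t)^2]_{y=\xi}\,\big] = \hat{\mathbb{E}}\big[\xi^2\,\hat{\mathbb{E}}[(B_T-B_t)^2]\big],
\]
using positive homogeneity of degree $2$ in the scalar $y$ (here $y^2 \ge 0$, so $\hat{\mathbb E}[y^2 Z] = y^2 \hat{\mathbb E}[Z]$). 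Then $\hat{\mathbb{E}}[(B_T-B_t)^2] = \hat{\mathbb{E}}[B_{T-t}^2] = \overline\sigma^2 (T-t)$, the last equality coming from the $G$-normal distribution of $B_{T-t}/\sqrt{T-t}$ with $G(a)=\tfrac12(\overline\sigma^2 a^+ - \underline\sigma^2 a^-)$, so that $\hat{\mathbb E}[B_1^2] = 2G(1) = \overline\sigma^2$ (alternatively from the explicit density formula in Definition \ref{Def-Gnormal} with $p=2$). Combining, $\hat{\mathbb{E}}[\xi^2(B_T-B_t)^2] = \overline\sigma^2(T-t)\,\hat{\mathbb{E}}[\xi^2]$.

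At this point the inequality is nearly immediate: by sub-additivity,
\[
\hat{\mathbb{E}}[\xi^2(B_T-B_t)^2 - \overline\sigma^2\xi^2(T-t)] \le \hat{\mathbb{E}}[\xi^2(B_T-B_t)^2] + \hat{\mathbb{E}}[-\overline\sigma^2(T-t)\xi^2]= \overline\sigma^2(T-t)\hat{\mathbb E}[\xi^2] - \overline\sigma^2(T-t)\hat{\mathbb E}[\xi^2] = 0,
\]
where I also used $\hat{\mathbb E}[-c\xi^2] = -c\,\hat{\mathbb E}[\xi^2]$ for the constant $c=\overline\sigma^2(T-t)\ge 0$ — wait, this needs $\hat{\mathbb E}[-\xi^2]=-\hat{\mathbb E}[\xi^2]$, which is false in general. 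Instead I would argue directly: set $Y := \xi^2(B_T-B_t)^2 - \overline\sigma^2(T-t)\xi^2$ and note $Y = \xi^2\big((B_T-B_t)^2 - \overline\sigma^2(T-t)\big)$ with $\xi^2 \ge 0$ bounded and $\Omega_t$-measurable. Using positive homogeneity in the scalar argument together with independence, $\hat{\mathbb E}[Y] = \hat{\mathbb E}[\xi^2 \cdot \hat{\mathbb E}[(B_{T-t})^2 - \overline\sigma^2(T-t)]] = \hat{\mathbb E}[\xi^2 \cdot 0] = 0 \le 0$; here $\hat{\mathbb E}[(B_{T-t})^2 - \overline\sigma^2(T-t)]$ need not vanish, only be $\le 0$, and $\xi^2\ge 0$ preserves the sign, so in fact $\hat{\mathbb E}[Y]\le 0$ follows from monotonicity applied to $\xi^2 \cdot (\text{nonpositive constant-in-}y\text{ quantity})$, bounded above by $0$.

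The main obstacle is making the "conditioning" step rigorous at the level of the excerpt's tools: we only have the definition of independence for $\varphi\in C_{b,Lip}$, not for the non-Lipschitz function $(x,\omega)\mapsto \xi(\omega)^2 x^2$. I would resolve this by first truncating: for $\xi\in B_b(\Omega_t)$ bounded by $K$, approximate $\xi$ by cylinder functions and approximate $x^2$ on the relevant range, or use a localization $x^2\wedge N$ and pass to the limit via the monotone convergence-type result (Proposition \ref{pr8}) and the density of $C_b(\Omega_t)$ in $L^2_P(\Omega_t)$ for each $P\in\mathcal P$, exactly as in the proof of Proposition \ref{p1}. Concretely: for fixed $P\in\mathcal P$, $E_P[\xi^2(B_T-B_t)^2] \le \hat{\mathbb E}[\xi^2(B_T-B_t)^2]$, and on the probability space $(\Omega,\mathcal F, P)$ the increment has second moment $E_P[(B_T-B_t)^2]\le \overline\sigma^2(T-t)$ — this last bound is the substantive input and follows from the representation $\hat{\mathbb E} = \sup_{P} E_P$ together with $\hat{\mathbb E}[(B_T-B_t)^2]=\overline\sigma^2(T-t)$. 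Then $E_P[\xi^2(B_T-B_t)^2 - \overline\sigma^2(T-t)\xi^2] \le 0$ for every $P$ by the tower property for $P$ and $\mathcal F_t$-measurability of $\xi^2$, hence $\hat{\mathbb E}[\xi^2(B_T-B_t)^2 - \overline\sigma^2(T-t)\xi^2] = \sup_P E_P[\cdots] \le 0$, which is \eqref{PropP1}. I expect the bulk of the write-up to be this approximation/measure-theoretic bookkeeping rather than any deep idea.
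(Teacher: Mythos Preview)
Your final per-$P$ argument has a real gap. From $\hat{\mathbb E}=\sup_{P\in\mathcal P}E_P$ you correctly get the \emph{unconditional} bound $E_P[(B_T-B_t)^2]\le\overline\sigma^2(T-t)$, but the tower property gives
\[
E_P\bigl[\xi^2\bigl((B_T-B_t)^2-\overline\sigma^2(T-t)\bigr)\bigr]
= E_P\bigl[\xi^2\,E_P\bigl[(B_T-B_t)^2-\overline\sigma^2(T-t)\,\big|\,\mathcal F_t\bigr]\bigr],
\]
and what you need is the \emph{conditional} bound $E_P[(B_T-B_t)^2\mid\mathcal F_t]\le\overline\sigma^2(T-t)$ $P$-a.s. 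Under a generic $P\in\mathcal P$ the increment $B_T-B_t$ is \emph{not} independent of $\mathcal F_t$ (independence of increments holds only in the $G$-sense, not under each individual $P$), so the unconditional second moment does not control the conditional one.

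The paper supplies exactly this missing ingredient through the quadratic variation: for $\xi\in C_b(\Omega_t)$, It\^o's formula gives
\[
\xi^2\bigl[(B_T-B_t)^2-(\langle B\rangle_T-\langle B\rangle_t)\bigr]=2\int_t^T \xi^2 B_s\,dB_s,
\]
hence $\hat{\mathbb E}\bigl[\xi^2(B_T-B_t)^2-\xi^2(\langle B\rangle_T-\langle B\rangle_t)\bigr]=0$, and the \emph{pathwise} q.s.\ inequality $\langle B\rangle_T-\langle B\rangle_t\le\overline\sigma^2(T-t)$ then yields \eqref{PropP1}. The extension from $C_b(\Omega_t)$ to $B_b(\Omega_t)$ is done by the same per-$P$ density argument you sketch. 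So the q.s.\ bound on $\langle B\rangle$ is precisely the conditional estimate your argument lacks. Incidentally, the $G$-independence route you abandoned actually works and even gives equality: since $\hat{\mathbb E}[(B_T-B_t)^2-\overline\sigma^2(T-t)]=0$ exactly (not merely $\le 0$), positive homogeneity for the nonnegative factor $y^2$ and independence give $\hat{\mathbb E}[\xi^2((B_T-B_t)^2-\overline\sigma^2(T-t))]=\hat{\mathbb E}[\xi^2\cdot 0]=0$; the only labour is the routine truncation needed to extend the independence identity beyond $C_{b,Lip}$ test functions.
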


\begin{proof}
If $\xi \in C_{b}(\Omega_{t})$, then by \cite{Peng2006a}, we have the following
It\^{o} formula:
\[
\xi^{2}[(B_{T}-B_{t})^{2}-(\langle B_{T}\rangle-\langle B_{t}\rangle
)]=2\int_{t}^{T}\xi^{2}B_{s}dB_{s}.\text{ }%
\]
It follows that $\mathbb{\hat{E}[}\xi^{2}(B_{T}-B_{t})^{2}-\xi^{2}(\langle
B_{T}\rangle-\langle B_{t}\rangle)]=0$. On the other hand, we have $\langle
B\rangle_{T}-\langle B\rangle_{t}\leq \bar{\sigma}^{2}(T-t)$, quasi surely.
Thus (\ref{PropP1}) holds for $\xi \in C_{b}(\Omega_{t})$. It follows that, for
each fixed $P\in \mathcal{P}$, we have
\begin{equation}
E_{P}\mathbb{[}\xi^{2}(B_{T}-B_{t})^{2}-\xi^{2}(\langle B\rangle_{T}-\langle
B\rangle_{t})]\leq0. \label{PropP1-1}%
\end{equation}
In the case when $\xi \in B_{b}(\Omega_{t})$, we can find a sequence $\{
\xi_{n}\}_{n=1}^{\infty}$ in $C_{b}(\Omega_{t})$, such that $\xi
_{n}\rightarrow \xi$ in $L^{p}(\Omega,\mathcal{F}_{t},P)$, for some $p>2$. Thus
we have
\[
E_{P}\mathbb{[}\xi_{n}^{2}(B_{T}-B_{t})^{2}-\xi_{n}^{2}(\langle B_{T}%
\rangle-\langle B_{t}\rangle)]\leq0,
\]
and then, by letting $n\rightarrow \infty$, we obtain (\ref{PropP1-1}) for
$\xi \in B_{b}(\Omega_{t})$. Thus (\ref{PropP1}) follows immediately for
$\xi \in B_{b}(\Omega_{t})$.
\end{proof}

The space $L_{G}^{p}(\Omega)$ is a Banach space under the norm $\left \Vert
\cdot \right \Vert _{p}:=\mathbb{E}_{G}[|\cdot|^{p}]^{1/p}$. We have also
introduced a space of `adapted processes' $M_{G}^{p}(0,T)$ which is also a
Banach space under the following norm:%
\[
\left \Vert \eta \right \Vert _{M_{G}^{p}(0,T)}=\left(  \int_{0}^{T}%
\mathbb{E}_{G}[|\eta_{t}|^{p}]dt\right)  ^{1/p},\  \  \eta \in M_{G}^{p}(0,T).
\]
Exactly following It\^{o}'s original idea, for each $\eta \in M_{G}^{2}(0,T)$,
we have introduced It\^{o}'s integral%
\[
\int_{0}^{T}\eta_{t}dB_{t}.
\]
We have%
\[
\mathbb{E}_{G}[\int_{0}^{T}\eta_{t}dB_{t}]=0,\  \  \mathbb{E}_{G}\left[  \left(
\int_{0}^{T}\eta_{t}dB_{t}\right)  ^{2}\right]  =\mathbb{E}_{G}\left[
\int_{0}^{T}|\eta_{t}|^{2}d\left \langle B\right \rangle _{t}\right]  .
\]
Moreover for each $\zeta \in M_{G}^{1}(0,T)$ we have the following estimate.
For each $\eta \in \mathbb{M}_{G}^{1}(0,T)$ we have%
\begin{equation}
\overline{\sigma}^{2}\mathbb{E}_{G}[\int_{0}^{T}|\zeta_{s}|ds]\geq
\mathbb{E}_{G}[\int_{0}^{T}|\zeta_{s}|d\left \langle B\right \rangle _{s}%
]\geq \underline{\sigma}^{2}\mathbb{E}_{G}[\int_{0}^{T}|\zeta_{s}%
|ds].\label{0.5}%
\end{equation}
The following relations play an essentially important role in this paper (see
\cite{Peng2010}): for each $\zeta \in M_{G}^{1}(0,T)$,%
\begin{equation}
\int_{0}^{T}\zeta_{t}d\left \langle B\right \rangle _{t}-\int_{0}^{T}%
2G(\zeta_{t})dt\leq0,\  \mathbb{E}_{G}[\int_{0}^{T}\zeta_{s}d\left \langle
B\right \rangle _{s}-\int_{0}^{T}2G(\zeta_{s})ds]=0.\label{0.10}%
\end{equation}
In this paper we introduce a norm $\left \Vert \cdot \right \Vert _{\mathbb{M}%
_{G}^{p}(0,T)}$ which is weaker than $\left \Vert \cdot \right \Vert _{M_{G}%
^{p}(0,T)}$:%
\[
\left \Vert \eta \right \Vert _{\mathbb{M}_{G}^{p}(0,T)}=\left(  \mathbb{E}%
_{G}[\int_{0}^{T}|\eta_{t}|^{p}dt]\right)  ^{1/p}.
\]
The completion of $M_{G}^{p}(0,T)$ under this norm is denoted by
$\mathbb{M}_{G}^{p}(0,T)$. It is easy to check that the definition of
It\^{o}'s integral can be extended to the case $\eta \in \mathbb{M}_{G}%
^{2}(0,T)$ and the integral $\int_{0}^{T}\zeta_{t}d\left \langle B\right \rangle
_{t}$ can be defined also for $\zeta \in \mathbb{M}_{G}^{1}(0,T)$. The above
relations still hold true.

We list the result of Song of $G$-martingale decomposition which generalizes
that of \cite{STZ}. Let $H_{G}^{0}(0,T)$ be the family of simple processes of
form $z_{t}=\sum_{i=0}^{N-1}z_{i}\mathbf{1}_{[t_{i},t_{i+1})}(t)$,
$t\in \lbrack0,T]$, $z_{i}\in L_{ip}(\Omega_{t_{i}})$. For each $z$ in this
space, we define the following norm
\[
\left \Vert z\right \Vert _{H_{G}^{p}(0,T)}=\left[  \left(  \int_{0}^{T}%
|z_{s}|^{2}ds\right)  ^{p/2}\right]  ^{1/p}%
\]
and denote by $H_{G}^{p}(0,T)$ the completion of $H_{G}^{0}(0,T)$ under this norm.

\begin{theorem}
\label{Songthm} (\cite{Song}, Theorem 4.5) For any given $p>1$ and $X\in
L_{G}^{p}(\Omega_{T})$ the $G$-martingale $M_{t}=\mathbb{E}_{G}[X|\Omega_{t}%
]$, $t\in \lbrack0,T]$, has the following decomposition:%
\[
M_{t}=\mathbb{E}_{G}[X]+\int_{0}^{t}z_{s}dB_{s}-A_{t},\ t\in \lbrack0,T],
\]
where $z\in H_{G}^{1}(0,T)$ and $A$ is a continuous increasing process with
$A_{0}=0$ such that $(-A_{t})_{0\leq t\leq T}$ is a $G$-martingale.
Furthermore the above decomposition is unique and $z\in H_{G}^{\alpha}(0,T)$,
$K_{T}\in L_{G}^{\alpha}(\Omega_{T})$ for any $1\leq \alpha<p$.
\end{theorem}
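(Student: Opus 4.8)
The plan is to reduce the statement to the Lipschitz case and then close the argument by an a priori estimate. For $X \in L_{ip}(\Omega_T)$ I would start from the representation of \cite{Peng2007, Peng2010}: there are $z \in M_G^2(0,T)$ and $\eta \in M_G^1(0,T)$ with
\[
\mathbb{E}_G[X|\Omega_t] = \mathbb{E}_G[X] + \int_0^t z_s\, dB_s + \int_0^t \eta_s\, d\langle B\rangle_s - \int_0^t 2G(\eta_s)\, ds,
\]
and then set $A_t := \int_0^t 2G(\eta_s)\, ds - \int_0^t \eta_s\, d\langle B\rangle_s$. By (\ref{0.10}), $A$ is continuous and nondecreasing, $A_0 = 0$, and $-A$ is a $G$-martingale, while $M^s_t := \mathbb{E}_G[X] + \int_0^t z_s\, dB_s$ is a symmetric $G$-martingale, and since $M_G^2(0,T)\subset H_G^1(0,T)$ this already gives the claimed decomposition when $X \in L_{ip}(\Omega_T)$. (If $G$ is degenerate I would instead build $M_t$ and its decomposition directly by the backward recursion along the finite time grid on which $X$ is a cylinder function.)

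The main obstacle, and the step that does the real work, is the a priori estimate: for some constant $C$ depending only on $p,\alpha,T$ and $\overline\sigma$, and for all $X \in L_{ip}(\Omega_T)$ and all $1 \le \alpha < p$,
\[
\mathbb{E}_G\Big[\big(\int_0^T |z_s|^2\, ds\big)^{\alpha/2}\Big] + \mathbb{E}_G[A_T^\alpha] \le C\, \mathbb{E}_G[|X|^p].
\]
I would obtain this in two stages. First, the $L^2_{\mathcal P}$-estimate of Soner--Touzi--Zhang \cite{STZ} gives $\mathbb{E}_G[A_T^2] + \mathbb{E}_G[\int_0^T |z_s|^2\, ds] \le C^* \mathbb{E}_G[\sup_{t}|M_t|^2]$. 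Second, I would upgrade the integrability from exponent $2$ to any $\alpha < p$: using $A_T = \mathbb{E}_G[X] + \int_0^T z_s\, dB_s - X$, a Burkholder--Davis--Gundy-type comparison $\mathbb{E}_G[(\int_0^T|z_s|^2 ds)^{\alpha/2}] \asymp \mathbb{E}_G[\sup_t|\int_0^t z_s\, dB_s|^\alpha]$ for the $G$-stochastic integral, and a Doob-type maximal inequality $\mathbb{E}_G[\sup_t |M_t|^p] \le C_p\, \mathbb{E}_G[|X|^p]$, the problem reduces to bounding $\mathbb{E}_G[\sup_t |M_t|^\alpha]$ and $\mathbb{E}_G[A_T^\alpha]$; a distributional (good-$\lambda$) argument — stopping the first time $\sup_{s\le t}|M_s|$ or $A_t$ crosses a level $\lambda$, estimating $P$ by $P$, integrating against $\alpha\lambda^{\alpha-1}\, d\lambda$, and taking the supremum over $\mathcal P$ — converts the $L^2_{\mathcal P}$-bound into the desired $L^\alpha$-bound for $\alpha<p$. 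Keeping every constant uniform over the non-dominated family $\mathcal P$ is the delicate point.

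Granting the estimate, the rest is routine. Choose $X_n \in L_{ip}(\Omega_T)$ with $X_n \to X$ in $L_G^p$; applying the estimate to $X_n - X_m$ shows that $(z^n)$ is Cauchy in $H_G^\alpha(0,T)$ and $(A^n_T)$ is Cauchy in $L_G^\alpha(\Omega_T)$ for every $\alpha < p$. With $z := \lim z^n$, the processes $A^n_t = \mathbb{E}_G[X_n] + \int_0^t z^n_s\, dB_s - M^n_t$ converge, uniformly in $t$ and quasi-surely along a subsequence, to $A_t := \mathbb{E}_G[X] + \int_0^t z_s\, dB_s - M_t$, which is continuous, satisfies $A_0 = 0$, is nondecreasing as a q.s.\ limit of nondecreasing processes, and has $-A$ a $G$-martingale; moreover $z \in H_G^\alpha(0,T)$ and $A_T \in L_G^\alpha(\Omega_T)$ for $\alpha < p$ by passing to the limit in the estimate. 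For uniqueness, if $\mathbb{E}_G[X] + \int_0^t z_s\, dB_s - A_t = \mathbb{E}_G[X] + \int_0^t \bar z_s\, dB_s - \bar A_t$ for all $t$, then $N_t := \int_0^t (z_s - \bar z_s)\, dB_s = A_t - \bar A_t$ is at once a symmetric $G$-martingale — hence a continuous $P$-martingale for every $P \in \mathcal P$ — and a process of bounded variation; a continuous $P$-martingale of bounded variation is constant, so $N \equiv 0$ quasi-surely, whence $\mathbb{E}_G[\int_0^T |z_s - \bar z_s|^2\, d\langle B\rangle_s] = \mathbb{E}_G[N_T^2] = 0$, and by (\ref{0.5}) we conclude $z = \bar z$ and therefore $A = \bar A$.
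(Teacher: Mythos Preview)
The paper does not prove Theorem~\ref{Songthm}; it is quoted verbatim from \cite{Song} (Song, Theorem~4.5) and used as a black box in Section~5. Consequently there is no ``paper's own proof'' to compare your proposal against.

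That said, your outline is a reasonable sketch of how such a decomposition theorem is established, and is in the spirit of Song's argument: representation on $L_{ip}(\Omega_T)$, an a priori estimate controlling $\|z\|_{H_G^\alpha}$ and $\|A_T\|_{L_G^\alpha}$ by $\|X\|_{L_G^p}$, and passage to the limit. Two caveats. First, you invoke the Soner--Touzi--Zhang estimate, which requires $\mathbb{E}_G[\sup_t|M_t|^2]<\infty$; bridging from $X\in L_G^p$ with $p$ possibly close to $1$ to this $\mathbb{L}^2_{\mathcal P}$-hypothesis is exactly the difficulty Song's paper overcomes, and your good-$\lambda$ sketch presupposes the $L^2$ bound rather than replacing it. Second, the Doob-type inequality $\mathbb{E}_G[\sup_t|M_t|^p]\le C_p\,\mathbb{E}_G[|X|^p]$ you use is itself nontrivial in the $G$-framework (it is part of what Song proves). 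So your plan correctly locates the hard step but does not yet supply it; a full proof would have to carry out the $L^\alpha$ estimates directly, uniformly in $P\in\mathcal P$, without first assuming the $\mathbb{L}^2_{\mathcal P}$-bound.
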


\section{A priori estimates and representation theorem for $L_{ip}(\Omega
_{T})$}

Let the function $G$ be given as the above. Then for a fixed $\varepsilon
\in(0,(\overline{\sigma}^{2}-\underline{\sigma}^{2})/2]$, we set
$G_{\varepsilon}(a)=G(a)-\frac{\varepsilon}{2}|a|$.

\begin{theorem}
We assume that $\xi \in L_{G}^{2}(\Omega_{T})$ has the following
representation: there exists a pair of processes $(z,\eta)\in H_{G}%
^{p}(0,T)\times M_{G_{\varepsilon}}^{1}(0,T)$, with $p\in \lbrack1,2)$, such
that
\begin{equation}
\xi=\mathbb{E}_{G}[\xi]+\int_{0}^{T}z_{s}^{\xi}dB_{s}+\int_{0}^{T}\eta
_{s}^{\xi}d\left \langle B\right \rangle _{s}-\int_{0}^{T}2G(\eta_{s}^{\xi})ds.
\label{2.8}%
\end{equation}
Then we have
\begin{equation}
\varepsilon \mathbb{E}_{G_{\varepsilon}}[\int_{0}^{T}|\eta_{s}^{\xi}%
|ds]\leq \mathbb{E}_{G}[\xi]+\mathbb{E}_{G_{\varepsilon}}[-\xi]. \label{2.5}%
\end{equation}

\end{theorem}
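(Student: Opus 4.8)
The plan is to feed the representation (\ref{2.8}), written for $-\xi$, into the sublinear expectation $\mathbb{E}_{G_{\varepsilon}}$ and read off the bound. From (\ref{2.8}),
\[
-\xi=-\mathbb{E}_{G}[\xi]-\int_{0}^{T}z_{s}^{\xi}dB_{s}-\int_{0}^{T}\eta_{s}^{\xi}d\langle B\rangle_{s}+\int_{0}^{T}2G(\eta_{s}^{\xi})ds .
\]
Since $\mathbb{E}_{G}[\xi]$ is a real constant, the constant-preserving and sub-additivity properties of $\mathbb{E}_{G_{\varepsilon}}$ (which together give $\mathbb{E}_{G_{\varepsilon}}[c+Y]=c+\mathbb{E}_{G_{\varepsilon}}[Y]$) yield
\[
\mathbb{E}_{G}[\xi]+\mathbb{E}_{G_{\varepsilon}}[-\xi]=\mathbb{E}_{G_{\varepsilon}}\Big[-\int_{0}^{T}z_{s}^{\xi}dB_{s}+\int_{0}^{T}2G(\eta_{s}^{\xi})ds-\int_{0}^{T}\eta_{s}^{\xi}d\langle B\rangle_{s}\Big].
\]
Next I would use the pointwise identity $2G(a)=2G_{\varepsilon}(a)+\varepsilon|a|$, valid for all $a$ by the very definition of $G_{\varepsilon}$, to split the integrand above and rewrite the right-hand side as $\mathbb{E}_{G_{\varepsilon}}[U+W]$, where $U:=\varepsilon\int_{0}^{T}|\eta_{s}^{\xi}|ds$ and $W:=-\int_{0}^{T}z_{s}^{\xi}dB_{s}+\int_{0}^{T}2G_{\varepsilon}(\eta_{s}^{\xi})ds-\int_{0}^{T}\eta_{s}^{\xi}d\langle B\rangle_{s}$.

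It then remains to show $\mathbb{E}_{G_{\varepsilon}}[U+W]\ge\mathbb{E}_{G_{\varepsilon}}[U]$. By sub-additivity in the form $\mathbb{E}_{G_{\varepsilon}}[U]=\mathbb{E}_{G_{\varepsilon}}[(U+W)+(-W)]\le\mathbb{E}_{G_{\varepsilon}}[U+W]+\mathbb{E}_{G_{\varepsilon}}[-W]$, it suffices to prove $\mathbb{E}_{G_{\varepsilon}}[-W]\le0$. Now $-W=\int_{0}^{T}z_{s}^{\xi}dB_{s}+\big(\int_{0}^{T}\eta_{s}^{\xi}d\langle B\rangle_{s}-\int_{0}^{T}2G_{\varepsilon}(\eta_{s}^{\xi})ds\big)$, and relation (\ref{0.10}), read with $G_{\varepsilon}$ in place of $G$ (legitimate because $\eta^{\xi}\in M_{G_{\varepsilon}}^{1}(0,T)$), says exactly that the parenthesized process is $\le0$ quasi-surely. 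Hence, by monotonicity and the vanishing of the $G_{\varepsilon}$-expectation of an Itô integral, $\mathbb{E}_{G_{\varepsilon}}[-W]\le\mathbb{E}_{G_{\varepsilon}}[\int_{0}^{T}z_{s}^{\xi}dB_{s}]=0$. Combining this with the previous display gives $\mathbb{E}_{G}[\xi]+\mathbb{E}_{G_{\varepsilon}}[-\xi]\ge\varepsilon\mathbb{E}_{G_{\varepsilon}}[\int_{0}^{T}|\eta_{s}^{\xi}|ds]$, which is (\ref{2.5}).

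The part that requires genuine care is that all of the above is done under $\mathbb{E}_{G_{\varepsilon}}$ although the representation and its ingredients are a priori attached to $G$. Since $\varepsilon\in(0,(\overline{\sigma}^{2}-\underline{\sigma}^{2})/2]$, the function $G_{\varepsilon}$ is again monotone and sublinear, with associated bounds satisfying $\underline{\sigma}^{2}+\varepsilon\le\overline{\sigma}^{2}-\varepsilon$, and its family of measures satisfies $\mathcal{P}_{G_{\varepsilon}}\subset\mathcal{P}_{G}$, so that $\mathbb{E}_{G_{\varepsilon}}[\,\cdot\,]\le\mathbb{E}_{G}[\,\cdot\,]$ on the common domain; this domination guarantees that $\xi$ and all the stochastic integrals appearing above lie in the relevant $\mathbb{L}^{1}$-space for $\mathbb{E}_{G_{\varepsilon}}$, and that quasi-sure statements for $G$ remain quasi-sure for $G_{\varepsilon}$. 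The one genuinely delicate step is $\mathbb{E}_{G_{\varepsilon}}[\int_{0}^{T}z_{s}^{\xi}dB_{s}]=0$ under the sole assumption $z^{\xi}\in H_{G}^{p}(0,T)$ with $p<2$: here one uses that $\|z^{\xi}\|_{H_{G}^{1}(0,T)}\le\|z^{\xi}\|_{H_{G}^{p}(0,T)}$ (so $z^{\xi}\in H_{G}^{1}(0,T)$), that the Itô integral extends to $H_{G}^{1}(0,T)$ with an estimate of the type $\mathbb{E}_{G}[|\int_{0}^{T}z_{s}dB_{s}|]\le C\|z\|_{H_{G}^{1}(0,T)}$, hence the same under $\mathbb{E}_{G_{\varepsilon}}$ by domination, and that simple processes in $H_{G}^{0}(0,T)$, for which the identity is immediate from the properties of the Itô integral recalled in Section 2, are dense. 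Likewise relation (\ref{0.10}) and its quasi-sure sign extend from $M_{G_{\varepsilon}}^{1}(0,T)$ to $\mathbb{M}_{G_{\varepsilon}}^{1}(0,T)$ as noted at the end of Section 2.
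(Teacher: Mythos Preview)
Your argument is correct and is essentially the paper's own proof, merely organized in reverse: the paper starts from the identity $\mathbb{E}_{G_{\varepsilon}}\big[\int_{0}^{T}z_{s}^{\xi}dB_{s}+\int_{0}^{T}\eta_{s}^{\xi}d\langle B\rangle_{s}-\int_{0}^{T}2G_{\varepsilon}(\eta_{s}^{\xi})ds\big]=0$ (i.e.\ $\mathbb{E}_{G_{\varepsilon}}[-W]=0$ in your notation, using the \emph{equality} in (\ref{0.10}) rather than only the inequality), substitutes (\ref{2.8}) and $2G-2G_{\varepsilon}=\varepsilon|\cdot|$, and then applies sub-additivity to peel off $\mathbb{E}_{G_{\varepsilon}}\big[\varepsilon\int_{0}^{T}|\eta_{s}^{\xi}|ds\big]$. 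The technical caveats you raise (domination $\mathcal{P}_{G_{\varepsilon}}\subset\mathcal{P}_{G}$, vanishing of the It\^{o} integral under $\mathbb{E}_{G_{\varepsilon}}$) are exactly the ones implicitly used in the paper.
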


\begin{proof}
From
\begin{align*}
&  0=\mathbb{E}_{G_{\varepsilon}}[\int_{0}^{T}z_{s}^{\xi}dB_{s}+\int_{0}%
^{T}\eta_{s}^{\xi}d\left \langle B\right \rangle _{s}-\int_{0}^{T}%
2G_{\varepsilon}(\eta_{s}^{\xi})ds]\\
&  =\mathbb{E}_{G_{\varepsilon}}[\xi-\mathbb{E}_{G}[\xi]+\int_{0}^{T}%
2(G(\eta_{s}^{\xi})-G_{\varepsilon}(\eta_{s}^{\xi}))ds]\\
&  =\mathbb{E}_{G_{\varepsilon}}[\xi-\mathbb{E}_{G}[\xi]+\int_{0}%
^{T}\varepsilon|\eta_{s}^{\xi}|ds]\\
&  \geq \mathbb{E}_{G_{\varepsilon}}[\int_{0}^{T}\varepsilon|\eta_{s}^{\xi
}|ds]-\mathbb{E}_{G}[\xi]-\mathbb{E}_{G_{\varepsilon}}[-\xi]
\end{align*}
we immediately have (\ref{2.5}).
\end{proof}

For the rest of this paper we assume that $\overline{\sigma}\geq
\underline{\sigma}>0$.

We set
\[
L_{ip}(\Omega_{T})=\{ \xi=\varphi(B_{t_{1}},B_{t_{2}},\cdots,B_{t_{n}%
}),\  \varphi \in C_{Lip}(\mathbb{R}^{n}),\ t_{i}\in \lbrack0,\infty)\text{,
}i=1,2,\cdots,n\geq1\}
\]

\begin{theorem}
For each $\xi \in L_{ip}(\Omega_{T})$, we have a unique representation
(\ref{2.8}).
\end{theorem}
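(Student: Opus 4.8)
The plan is to establish the representation (\ref{2.8}) for $\xi\in L_{ip}(\Omega_T)$ by combining the known smooth-case result with the a priori estimate from the previous theorem, and then upgrade to the uniqueness claim. The starting point is the classical representation of Peng: for $\xi\in L_{ip}(\Omega_T)$ (and $G$ non-degenerate, which holds since $\underline{\sigma}>0$), the $G$-martingale $M_t=\mathbb{E}_G[\xi|\Omega_t]$ admits a decomposition of the form $M_t=\mathbb{E}_G[\xi]+\int_0^t z_s\,dB_s-A_t$, and in the smooth setting one in fact knows that $A_t=\int_0^t 2G(\eta_s)\,ds-\int_0^t\eta_s\,d\langle B\rangle_s$ for some $\eta\in M_G^2(0,T)$ — this is precisely the representation (\ref{2.8}), but a priori only for $\xi$ sufficiently regular (e.g. $\xi$ a cylinder function of a smooth function, or obtained by solving a PDE). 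So the first step is to reduce a general $\xi\in L_{ip}(\Omega_T)$ to a sequence of such regular $\xi^{(m)}$ by mollification, obtaining representations $\xi^{(m)}=\mathbb{E}_G[\xi^{(m)}]+\int_0^T z_s^{(m)}\,dB_s+\int_0^T \eta_s^{(m)}\,d\langle B\rangle_s-\int_0^T 2G(\eta_s^{(m)})\,ds$.

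The second step is to pass to the limit. Here the a priori estimate (\ref{2.5}) is the workhorse: applying it to the differences $\xi^{(m)}-\xi^{(k)}$ (which again lie in $L_{ip}(\Omega_T)$ and have a representation with $\eta = \eta^{(m)}-\eta^{(k)}$, using linearity of the representation) gives
\[
\varepsilon\,\mathbb{E}_{G_\varepsilon}\Big[\int_0^T|\eta_s^{(m)}-\eta_s^{(k)}|\,ds\Big]\le \mathbb{E}_G[\xi^{(m)}-\xi^{(k)}]+\mathbb{E}_{G_\varepsilon}[-(\xi^{(m)}-\xi^{(k)})].
\]
Since $\xi^{(m)}\to\xi$ in $L_G^2(\Omega_T)$, hence in $L_G^1$, and both $\mathbb{E}_G$ and $\mathbb{E}_{G_\varepsilon}$ are $1$-Lipschitz for the $L^1_G$-norm, the right-hand side tends to $0$, so $(\eta^{(m)})$ is Cauchy in $M_{G_\varepsilon}^1(0,T)$ and converges to some $\eta^\xi$. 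Because $G(a)-G_\varepsilon(a)=\tfrac{\varepsilon}{2}|a|$ and $G$ itself is $1$-Lipschitz (in fact $\tfrac12\overline\sigma^2$-Lipschitz) in $a$, the term $\int_0^T 2G(\eta_s^{(m)})\,ds$ converges in $L^1_G$, and the term $\int_0^T\eta_s^{(m)}\,d\langle B\rangle_s$ converges by (\ref{0.5}). Consequently $\int_0^T z_s^{(m)}\,dB_s = \xi^{(m)}-\mathbb{E}_G[\xi^{(m)}]-\int_0^T\eta_s^{(m)}\,d\langle B\rangle_s+\int_0^T 2G(\eta_s^{(m)})\,ds$ converges in $L^2_G$; by the Itô isometry $\mathbb{E}_G[(\int_0^T z_s\,dB_s)^2]=\mathbb{E}_G[\int_0^T|z_s|^2\,d\langle B\rangle_s]\ge\underline{\sigma}^2\mathbb{E}_G[\int_0^T|z_s|^2\,ds]$ this forces $(z^{(m)})$ to be Cauchy in $\mathbb{M}_G^2(0,T)$, and one then checks the limit actually lies in $H_G^p(0,T)$ for $p\in[1,2)$ using Theorem \ref{Songthm} applied to $\xi$ (its $z$-component is in $H_G^\alpha$ for all $\alpha<2$) together with uniqueness of the Song decomposition to identify the two $z$'s. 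Passing to the limit in the identity (\ref{2.8}) for $\xi^{(m)}$ yields the representation for $\xi$.

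For uniqueness: suppose $\xi$ has two representations with pairs $(z,\eta)$ and $(z',\eta')$. Subtracting, $0=\int_0^T(z_s-z_s')\,dB_s+\int_0^T\eta_s\,d\langle B\rangle_s-\int_0^T 2G(\eta_s)\,ds-\int_0^T\eta_s'\,d\langle B\rangle_s+\int_0^T 2G(\eta_s')\,ds$. Taking $\mathbb{E}_G[\cdot|\Omega_t]$ and using that $\int_0^t(z_s-z_s')\,dB_s$ is a symmetric $G$-martingale while $-A_t:=-\big(\int_0^t 2G(\eta_s)ds-\int_0^t\eta_s d\langle B\rangle_s\big)$ and $-A'_t$ are $G$-martingales, the process $\int_0^t(z_s-z_s')dB_s + (A'_t-A_t)$ is a symmetric $G$-martingale of bounded variation starting at $0$ plus a martingale part; the standard argument (as in the Song/STZ uniqueness, using that a symmetric $G$-martingale has no finite-variation part) gives $z=z'$ in $H_G^1(0,T)$ and $A=A'$. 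Then $A=A'$ means $\int_0^t(\eta_s-\eta_s')d\langle B\rangle_s = \int_0^t 2(G(\eta_s)-G(\eta_s'))ds$; applying $-\mathbb{E}_{G_\varepsilon}[\cdot]$-type estimates, or more directly the a priori estimate (\ref{2.5}) to $\xi-\xi=0$ phrased with $\eta-\eta'$, forces $\mathbb{E}_{G_\varepsilon}[\int_0^T|\eta_s-\eta_s'|ds]=0$, hence $\eta=\eta'$ in $M_{G_\varepsilon}^1(0,T)$; nondegeneracy $\underline\sigma>0$ with (\ref{0.5}) then upgrades this to equality in $M_G^1(0,T)$.

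The main obstacle I expect is the passage to the limit for the $z$-component and pinning down that the limiting $z$ genuinely belongs to $H_G^p(0,T)$ with the stated integrability, rather than merely to the weaker space $\mathbb{M}_G^2(0,T)$: one must interlock the convergence argument with the uniqueness part of Song's decomposition theorem (Theorem \ref{Songthm}) to transfer its regularity conclusion ($z\in H_G^\alpha$ for $\alpha<p=2$) onto the candidate from the mollification scheme. A secondary subtlety is justifying that the mollified $\xi^{(m)}$ really do admit a representation of the exact form (\ref{2.8}) with a genuine process $\eta^{(m)}$ — this rests on Peng's PDE-based construction for smooth cylinder functions, which one should cite precisely rather than reprove.
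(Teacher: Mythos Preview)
Your plan has a genuine gap at the step where you apply the a~priori estimate (\ref{2.5}) to the difference $\xi^{(m)}-\xi^{(k)}$. You claim this difference ``has a representation with $\eta=\eta^{(m)}-\eta^{(k)}$, using linearity of the representation'', but the representation (\ref{2.8}) is \emph{not} linear: subtracting the two identities gives
\[
\xi^{(m)}-\xi^{(k)}=c_{m,k}+\int_0^T\bigl(z^{(m)}_s-z^{(k)}_s\bigr)\,dB_s+\int_0^T\bigl(\eta^{(m)}_s-\eta^{(k)}_s\bigr)\,d\langle B\rangle_s-2\int_0^T\bigl(G(\eta^{(m)}_s)-G(\eta^{(k)}_s)\bigr)\,ds,
\]
where $c_{m,k}=\mathbb{E}_G[\xi^{(m)}]-\mathbb{E}_G[\xi^{(k)}]$ is in general not $\mathbb{E}_G[\xi^{(m)}-\xi^{(k)}]$, and, more seriously, $G(\eta^{(m)})-G(\eta^{(k)})\neq G(\eta^{(m)}-\eta^{(k)})$ because $G$ is only sublinear. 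Hence the difference is not of the form (\ref{2.8}) and the theorem yielding (\ref{2.5}) does not apply; your Cauchy estimate for $(\eta^{(m)})$ is unjustified.

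The paper's approximation is designed precisely to circumvent this nonlinearity. Instead of mollifying $\varphi$, it solves the $G$-heat equation $\partial_tV+G(V_{xx})=0$ with $V(t_2,\cdot)=\varphi$, invokes Krylov's interior regularity to get $V\in C^{1+\alpha/2,2+\alpha}$ on $[t_1,t_2-\delta]$, and sets $\eta^\delta_s=\tfrac12\mathbf{1}_{[t_1,t_2-\delta]}(s)\,V_{xx}(s,B_s)$. The key point is that for $\delta>\delta'$ the processes are \emph{nested}: $\eta^\delta$ agrees with $\eta^{\delta'}$ on $[t_1,t_2-\delta]$ and vanishes on $(t_2-\delta,t_2-\delta']$. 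Since at every $s$ one of $\eta^\delta_s$ and $\eta^{\delta'}_s-\eta^\delta_s$ is zero and $G(0)=0$, one genuinely has $G(\eta^{\delta'}_s)-G(\eta^{\delta}_s)=G(\eta^{\delta'}_s-\eta^{\delta}_s)$ pointwise in $s$, so the increment $V(t_2-\delta',B_{t_2-\delta'})-V(t_2-\delta,B_{t_2-\delta})$ \emph{is} of the exact form (\ref{2.8}) and (\ref{2.5}) applies legitimately. Your mollification scheme produces $\eta^{(m)},\eta^{(k)}$ that differ everywhere, so this disjoint-support trick is unavailable and the argument cannot be repaired without a different estimate (of the type used in Section~4 with the multiplier $\mu_s$). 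A secondary issue: the ``smooth cylinder'' representation you propose to cite as a black box is essentially what the theorem is establishing; the paper does not import it but constructs it directly via the PDE and It\^o's formula on $[t_1,t_2-\delta]$, then lets $\delta\downarrow0$ and iterates backwards over the partition points.
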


\begin{proof}
It suffices to prove the existence of $(z,\eta)$ for such type of $\xi$.

We first consider the case where $\xi=\varphi(B_{t_{2}}-B_{t_{1}})$ with
$t_{1}<t_{2}\leq T$ and $\varphi$ is a Lipschitz and bounded function on
$\mathbb{R}$. Let $V$ be the unique viscosity solution of%
\begin{equation}
\partial_{t}V+G(D^{2}V)=0,\ (t,x)\in \lbrack t_{1},t_{2})\times \mathbb{R}%
^{d}\text{,}\  \label{e14}%
\end{equation}
with terminal condition
\begin{equation}
V(t_{2},x)=\varphi(x).\label{equ-h}%
\end{equation}
Since (\ref{e14}) is a uniform parabolic PDE and $G$ is a convex and Lipschitz
function thus, by the regularity of $V$ (see Krylov \cite{Krylov}, Example
6.1.8 and Theorem 6.4.3, see also Appendix of \cite{Peng2010}), we have
\[
\left \Vert V\right \Vert _{C^{1+\alpha/2,2+\alpha}([t_{1},t_{2}-\delta
]\times \mathbb{R}^{d})}<\infty,\  \text{for some }\alpha \in(0,1).
\]
Then, for each $\delta \in(0,t_{2}-t_{1})$, we set
\[
\eta_{s}^{\delta}=\mathbf{1}_{[t_{1},t_{2}-\delta]}(s)\frac{1}{2}%
V_{xx}(s,B_{s}),\  \ z_{s}^{\delta}=\mathbf{1}_{[t_{1},t_{2}-\delta]}%
(s)V_{x}(s,B_{s}),\  \ s\in \lbrack0,T],
\]
and%
\[
\eta_{s}=\mathbf{1}_{[t_{1},t_{2})}(s)\frac{1}{2}V_{xx}(s,B_{s}),\  \ z_{s}%
=\mathbf{1}_{[t_{1},t_{2})}(s)V_{x}(s,B_{s}),\  \ s\in \lbrack0,T].
\]
It is clear that $(z^{\delta},\eta^{\delta})\in M_{\bar{G}}^{2}(0,T)$ for each
fixed $\delta \in(0,t_{2}-t_{1})$. By It\^{o}'s formula we have \
\begin{align*}
&  V(t_{2}-\delta,B_{t_{2}-\delta})-V(t_{1},0)\\
&  =\int_{0}^{T}z_{s}^{\delta}dB_{s}+\int_{0}^{T}\eta_{s}^{\delta
}d\left \langle B\right \rangle _{s}-\int_{0}^{T}2G(\eta_{s}^{\delta})ds.
\end{align*}
On the other hand, for each $t,t^{\prime}\in \lbrack t_{1},t_{2}]$ and
$x,x^{\prime}\in \mathbb{R}^{d}$, we have
\[
|V(t,x)-V(t^{\prime},x^{\prime})|\leq C(\sqrt{|t-t^{\prime}|}+|x-x^{\prime}|).
\]
Thus we have
\begin{align*}
&  |\mathbb{\hat{E}}[|V(t_{2}-\delta^{\prime},B_{t_{2}-\delta^{\prime}%
})-V(t_{2}-\delta,B_{t_{2}-\delta})|^{2}]\\
&  \leq C(\sqrt{\delta-\delta^{\prime}}+|\delta-\delta^{\prime}|)+C\mathbb{E}%
_{G}[|B_{t_{2}-\delta}-B_{t_{2}-\delta^{\prime}}|^{2}]\\
&  \leq \bar{C}(\sqrt{\delta-\delta^{\prime}}+|\delta-\delta^{\prime}|).
\end{align*}
Now for each $t_{2}-t_{1}>\delta>\delta^{\prime}>0$,
\begin{align*}
&  V(t_{2}-\delta,B_{t_{2}-\delta})-V(t_{2}-\delta^{\prime},B_{t_{2}%
-\delta^{\prime}})\\
&  =\int_{t_{1}}^{t_{2}}(z_{s}^{\delta}-z_{s}^{\delta^{\prime}})dB_{s}%
+\int_{t_{1}}^{t_{2}}(\eta_{s}^{\delta}-\eta_{s}^{\delta^{\prime}%
})d\left \langle B\right \rangle _{s}-2\int_{t_{1}}^{t_{2}}(G(\eta_{s}^{\delta
})-G(\eta_{s}^{\delta^{\prime}}))ds\\
&  =\int_{t_{1}}^{t_{2}}(z_{s}^{\delta}-z_{s}^{\delta^{\prime}})dB_{s}%
+\int_{t_{1}}^{t_{2}}(\eta_{s}^{\delta}-\eta_{s}^{\delta^{\prime}%
})d\left \langle B\right \rangle _{s}-2\int_{t_{1}}^{t_{2}}G(\eta_{s}^{\delta
}-\eta_{s}^{\delta^{\prime}})ds.
\end{align*}
We then can apply (\ref{2.5}) to prove that%
\begin{align*}
\varepsilon \mathbb{E}_{G_{\varepsilon}}[\int_{t_{1}}^{t_{2}}|\eta_{s}^{\delta
}-\eta_{s}^{\delta^{\prime}}|ds] &  \leq \mathbb{E}_{G}[V(t_{2}-\delta
,B_{t_{2}-\delta})-V(t_{2}-\delta^{\prime},B_{t_{2}-\delta^{\prime}})]\\
&  +\mathbb{E}_{G_{\varepsilon}}[-V(t_{2}-\delta,B_{t_{2}-\delta}%
)+V(t_{2}-\delta^{\prime},B_{t_{2}-\delta^{\prime}})]\\
&  \leq \bar{C}(\sqrt{\delta-\delta^{\prime}}+|\delta-\delta^{\prime}|).
\end{align*}
Thus, when $\delta_{i}\downarrow0$, $\{ \eta^{\delta_{i}}\}_{i=1}^{\infty}$
forms a Cauchy sequence in $M_{G_{\varepsilon}}^{1}(0,T)$, and the
representation of $\xi=\varphi(B_{t_{2}}-B_{t_{1}})$ is uniquely given
by{\footnotesize { }}%
\[
\xi=V(t_{1},0)+\int_{0}^{T}z_{s}dB_{s}+\int_{0}^{T}\eta_{s}d\left \langle
B\right \rangle _{s}-\int_{0}^{T}2G(\eta_{s})ds.
\]
Moreover we have $V(t_{1},0)=\mathbb{E}_{G}[X]=\mathbb{E}_{G}[\varphi(B_{T})]$.

We now consider the case where $\xi=\varphi(B_{t_{1}},B_{t_{2}}-B_{t_{1}})$,
where $\varphi$ is a Lipschitz and bounded function on $\mathbb{R}^{2}$. For
this random variable we first solve the following PDE for a fixed parameter
$y\in \mathbb{R}$:%
\[
\partial_{t}V^{y}+G(V_{xx}^{y})=0,\ s\in \lbrack t_{1},t_{2}),\  \ V^{y}%
(t_{2},x)=\varphi(y,x).\
\]
Then, setting
\[
\eta_{s}^{\delta}=\mathbf{1}_{[t_{1},t_{2}-\delta]}(s)\frac{1}{2}%
V_{xx}^{B_{t_{1}}}(s,B_{s}-B_{t_{1}}),\  \ z_{s}^{\delta}=\mathbf{1}%
_{[t_{1},t_{2}-\delta]}(s)V_{x}^{B_{t_{1}}}(s,B_{s}-B_{t_{1}}),\  \ s\in \lbrack
t_{1},T).
\]
and%
\[
\eta_{s}=\mathbf{1}_{[t_{1},t_{2})}(s)\frac{1}{2}V_{xx}^{B_{t_{1}}}%
(s,B_{s}-B_{t_{1}}),\  \ z_{s}=\mathbf{1}_{[t_{1},t_{2})}(s)V_{x}^{B_{t_{1}}%
}(s,B_{s}-B_{t_{1}}),\  \ s\in \lbrack t_{1},T].
\]
Exactly as in the first case, we can prove that
\begin{equation}
\xi=\mathbb{E}_{G}[\xi|\Omega_{t_{1}}]+\int_{t_{1}}^{T}z_{s}dB_{s}+\int
_{t_{1}}^{T}\eta_{s}d\left \langle B\right \rangle _{s}-\int_{t_{1}}^{T}%
2G(\eta_{s})ds \label{eq2.0}%
\end{equation}
and, moreover $\mathbb{E}_{G}[X|\Omega_{t_{1}}]=V^{B_{t_{1}}}(t_{1},0)$. It is
easy to check that $V^{y}(t,0)$ is a bounded and Lipschitz function of
$y\in \mathbb{R}$. We then can further solve, backwardly,
\[
\partial_{t}\bar{V}+G(\bar{V}_{xx})=0,\  \ t\in \lbrack0,t_{1}],\ V(t,x)=V^{x}%
(t,0).
\]
Setting
\[
\eta_{s}^{\delta}=\mathbf{1}_{[0,t_{1}-\delta]}(s)\frac{1}{2}\bar{V}%
_{xx}(s,B_{s}),\  \ z_{s}^{\delta}=\mathbf{1}_{[0,t_{1}-\delta]}(s)\bar{V}%
_{x}(s,B_{s}),\  \ s\in \lbrack0,t),
\]
and%
\[
\eta_{s}=\mathbf{1}_{[0,t_{1})}(s)\frac{1}{2}\bar{V}_{xx}(s,B_{s}%
),\  \ z_{s}=\mathbf{1}_{[0,t_{1})}(s)\bar{V}_{x}(s,B_{s}),\  \ s\in
\lbrack0,t_{1}),
\]
and then using again the same approach, we arrive at
\begin{equation}
\mathbb{E}_{G}[\xi|\Omega_{t_{1}}]=\mathbb{E}_{G}[\xi]+\int_{0}^{t_{1}}%
z_{s}dB_{s}+\int_{0}^{t_{1}}\eta_{s}d\left \langle B\right \rangle _{s}-\int
_{0}^{t_{1}}2G(\eta_{s})ds. \label{eq2.1}%
\end{equation}
This with (\ref{eq2.0}) yields that
\[
\xi=\mathbb{E}_{G}[\xi]+\int_{0}^{T}z_{s}dB_{s}+\int_{0}^{T}\eta
_{s}d\left \langle B\right \rangle _{s}-\int_{0}^{T}2G(\eta_{s})ds.
\]

We can use exactly the same approach to find, for $X=\varphi(B_{t_{1}%
},B_{t_{2}}-B_{t_{1}},\cdots,B_{t_{n}}-B_{t_{n-1}})$, for any given bounded
and Lipschitz function $\varphi$ defined on $\mathbb{R}^{n}$ and for any
$0\leq t_{1}<t_{2}<\cdots<t_{n}\leq T$. The proof is complete.
\end{proof}

\section{Uniqueness of representation theorem}

\begin{lemma}
We assume $\xi \in L_{\overline{G}}^{2}(\Omega_{T})$, with $\overline{G}%
=G\vee \overline{G}_{\varepsilon}$ where%
\begin{align*}
\overline{G}_{\varepsilon}(a)  &  =\frac{1}{2}[(\underline{\sigma}%
^{2}-\varepsilon)a^{+}-\underline{\sigma}_{\varepsilon}^{2}a^{-}%
],\  \ 0<\underline{\sigma}_{\varepsilon}^{2}\leq(\underline{\sigma}%
^{2}-\varepsilon)\\
G^{\varepsilon}(a)  &  =\frac{1}{2}[\overline{\sigma}_{\varepsilon}^{2}%
a^{+}-(\overline{\sigma}^{2}+\varepsilon)a^{-}],\  \  \ (\overline{\sigma}%
^{2}+\varepsilon)\leq \overline{\sigma}_{\varepsilon}^{2}.
\end{align*}
Then there exists at most one $(z,\eta)\in M_{\overline{G}_{\varepsilon}}%
^{2}(0,T)\times M_{\overline{G}_{\varepsilon}}^{1}(0,T)$ satisfying
\[
\xi=\mathbb{E}_{G}[\xi]+\int_{0}^{T}z_{s}dB_{s}+\int_{0}^{T}\eta
_{s}d\left \langle B\right \rangle _{s}-\int_{0}^{T}2G(\eta_{s})ds.
\]

\end{lemma}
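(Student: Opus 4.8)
The plan is to prove uniqueness by assuming two representations and showing their difference vanishes. Suppose $(z^1,\eta^1)$ and $(z^2,\eta^2)$ both lie in $M_{\overline{G}_{\varepsilon}}^{2}(0,T)\times M_{\overline{G}_{\varepsilon}}^{1}(0,T)$ and both represent $\xi$. Subtracting the two representations and writing $z=z^1-z^2$, $\eta=\eta^1-\eta^2$, we obtain
\[
0=\int_{0}^{T}z_{s}dB_{s}+\int_{0}^{T}\eta_{s}d\left\langle B\right\rangle_{s}-\int_{0}^{T}\bigl(2G(\eta_{s}^{1})-2G(\eta_{s}^{2})\bigr)ds.
\]
The term $2G(\eta^1)-2G(\eta^2)$ is not simply $2G(\eta)$ because $G$ is sublinear but not linear, so the first step is to control it: since $G$ is convex and positively homogeneous, $G(\eta^1_s)-G(\eta^2_s)$ is bounded between $-G(-\eta_s)$ and $G(\eta_s)$, hence between $\frac{1}{2}\underline{\sigma}^2(-\eta_s)\wedge\cdots$ — more usefully, it lies in the interval $[\,-\overline{G}_\varepsilon(-\eta_s)-\delta,\ G^\varepsilon(\eta_s)+\delta\,]$ type bounds, but the cleanest route is to write $2G(\eta^1_s)-2G(\eta^2_s)=\zeta_s$ where $\underline{\sigma}^2 \le$ the "effective density" $\le \overline{\sigma}^2$, i.e. $\zeta_s\,ds$ is absolutely continuous with respect to $\langle B\rangle$ in the comparison sense. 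Actually the key algebraic fact I would use is the elementary bound
\[
\underline{\sigma}^{2}(\eta^{1}_s-\eta^{2}_s)^{+}-\overline{\sigma}^{2}(\eta^{1}_s-\eta^{2}_s)^{-}\ \le\ 2G(\eta^{1}_s)-2G(\eta^{2}_s)\ \le\ \overline{\sigma}^{2}(\eta^{1}_s-\eta^{2}_s)^{+}-\underline{\sigma}^{2}(\eta^{1}_s-\eta^{2}_s)^{-},
\]
so that $\int_0^T(2G(\eta^1_s)-2G(\eta^2_s))ds - \int_0^T \eta_s d\langle B\rangle_s$ is, after regrouping, an expression to which the monotonicity/zero-expectation relations \eqref{0.10} and the inequality \eqref{0.5} can be applied under the auxiliary generators $\overline{G}_\varepsilon$ and $G^\varepsilon$.

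The heart of the argument is then to show $\eta\equiv 0$ in $\mathbb{M}_{\overline{G}_\varepsilon}^1(0,T)$, after which $\int_0^T z_s dB_s=0$ forces $z\equiv 0$ by the Itô isometry $\mathbb{E}_G[(\int_0^T z_s dB_s)^2]=\mathbb{E}_G[\int_0^T|z_s|^2 d\langle B\rangle_s]\ge \underline{\sigma}^2\mathbb{E}_G[\int_0^T|z_s|^2 ds]$ (using $\underline{\sigma}>0$, which is exactly why that hypothesis was imposed). To extract $\eta\equiv 0$: I would take $\mathbb{E}_{\overline{G}_\varepsilon}[\cdot]$ of the identity $0=\int_0^T z_s dB_s+\int_0^T\eta_s d\langle B\rangle_s-\int_0^T(2G(\eta^1_s)-2G(\eta^2_s))ds$ and also of its negative, exploiting that $\overline{G}_\varepsilon \le G$ strictly on $\{a>0\}$ and $G^\varepsilon \ge G$ — the gap between $G$ and the perturbed generators produces a strictly positive multiple of $\mathbb{E}_{\overline{G}_\varepsilon}[\int_0^T|\eta_s|ds]$ on one side of a chain of inequalities whose other side is $0$, exactly mirroring the a priori estimate proof of \eqref{2.5}. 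Concretely: compute $\mathbb{E}_{\overline{G}_\varepsilon}$ of $\int_0^T z_s dB_s+\int_0^T\eta_s d\langle B\rangle_s-\int_0^T 2\overline{G}_\varepsilon(\eta_s)ds=0$, rewrite the last integrand as $2G(\eta^1_s)-2G(\eta^2_s) + [\text{nonnegative correction}]$, and deduce that the correction term has zero expectation, which—because it is quasi-surely nonnegative and $\overline{G}_\varepsilon$ is strictly dominated by $G$—forces $\eta=0$ q.s.

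The main obstacle I anticipate is precisely the nonlinearity of $G$: unlike the existence proof where one deals with a single $\eta$, here $2G(\eta^1)-2G(\eta^2)$ cannot be replaced by $2G(\eta^1-\eta^2)$, so the clean cancellation in \eqref{0.10} is unavailable directly. The trick is to observe that on the (random) set where $\eta^1_s\ge\eta^2_s$ one has $2G(\eta^1_s)-2G(\eta^2_s)\le 2G(\eta^1_s-\eta^2_s)$ when $G$ is convex with $G(0)=0$ — no wait, convexity gives the reverse on rays through the origin; since $G$ is positively homogeneous, $2G(\eta^1_s)-2G(\eta^2_s)\le 2G(\eta^1_s-\eta^2_s)$ holds by subadditivity $G(\eta^1_s)\le G(\eta^1_s-\eta^2_s)+G(\eta^2_s)$. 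This subadditivity inequality (which is property (c) for the sublinear $G$) is the linchpin that lets me bound $\int_0^T(2G(\eta^1)-2G(\eta^2))ds$ above by $\int_0^T 2G(\eta)ds$ and symmetrically below by $-\int_0^T 2G(-\eta)ds$, restoring enough structure to run the a priori estimate machinery with $\overline{G}_\varepsilon$ in place of $G_\varepsilon$. Once $\eta=0$ is established the rest is the routine Itô-isometry argument noted above, and I would close by remarking that uniqueness of $z$ then also pins down $\mathbb{E}_G[\xi]$ consistently, completing the proof.
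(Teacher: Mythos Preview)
Your outline has the right skeleton (subtract the two representations, kill $\eta$ first, then use the It\^o isometry for $z$), but the step you flag as ``the main obstacle'' is in fact where the argument breaks. The subadditivity bounds you derive,
\[
-2G(-\eta_s)\ \le\ 2G(\eta^1_s)-2G(\eta^2_s)\ \le\ 2G(\eta_s),
\]
are correct but too loose to close the loop. Concretely, if you write the difference as
\[
0=\int_0^T z_s\,dB_s+\int_0^T\eta_s\,d\langle B\rangle_s-\int_0^T 2\overline{G}_\varepsilon(\eta_s)\,ds
-\int_0^T\bigl[\,2(G(\eta^1_s)-G(\eta^2_s))-2\overline{G}_\varepsilon(\eta_s)\,\bigr]ds
\]
and take $\mathbb{E}_{\overline{G}_\varepsilon}$, you need the bracketed integrand to be sign-definite. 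But inserting your lower bound gives
\[
-2G(-\eta_s)-2\overline{G}_\varepsilon(\eta_s)=\varepsilon\,\eta_s^{+}-(\overline{\sigma}^{2}-\underline{\sigma}_\varepsilon^{2})\,\eta_s^{-},
\]
which is negative whenever $\eta_s<0$; the upper bound fails symmetrically. The auxiliary generator $\overline{G}_\varepsilon$ is engineered to sit strictly below $G$ only on $\{a>0\}$, so a two-sided sandwich on $G(\eta^1)-G(\eta^2)$ cannot exploit it. Mimicking the proof of the a~priori estimate~(\ref{2.5}) therefore does not go through as written.

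The paper resolves this by a device your proposal does not contain: it multiplies the difference identity by the bounded process
\[
\mu_s=\frac{\hat{\eta}_s}{\delta+|\hat{\eta}_s|},\qquad \hat{\eta}=\eta^1-\eta^2,
\]
before taking $\mathbb{E}_{\overline{G}_\varepsilon}$. Since $G$ is monotone, $(a-b)(G(a)-G(b))=|a-b|\,|G(a)-G(b)|$, so after multiplication every term becomes one-signed: the $d\langle B\rangle$-integrand is $\frac{|\hat{\eta}_s|^2}{\delta+|\hat{\eta}_s|}\ge 0$, and $\mu_s(G(\eta^1_s)-G(\eta^2_s))=\frac{|\hat{\eta}_s|\,|G(\eta^1_s)-G(\eta^2_s)|}{\delta+|\hat{\eta}_s|}$. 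Now the \emph{lower} Lipschitz bound $|G(a)-G(b)|\ge\tfrac{\underline{\sigma}^2}{2}|a-b|$ (which is where $\underline{\sigma}>0$ enters) combines with $2\overline{G}_\varepsilon(c)=(\underline{\sigma}^2-\varepsilon)c$ for $c\ge 0$ to produce a clean lower bound $\varepsilon\int_0^T\frac{|\hat{\eta}_s|^2}{\delta+|\hat{\eta}_s|}\,ds$ on a quantity whose $\mathbb{E}_{\overline{G}_\varepsilon}$-expectation is zero. Sending $\delta\downarrow 0$ gives $\hat{\eta}=0$. The multiplication by $\mu_s$ is precisely the missing idea; once you have it, your final step for $z$ via the It\^o isometry is correct.
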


\begin{proof}
Let $(z^{i},\eta^{i})\in M_{\overline{G}_{\varepsilon}}^{2}(0,T)\times
M_{\overline{G}_{\varepsilon}}^{1}(0,T)$, $i=1,2$ satisfy the above
representation. Then
\[
\int_{0}^{t}\hat{z}_{s}dB_{s}+\int_{0}^{t}\hat{\eta}_{s}d\left \langle
B\right \rangle _{s}-\int_{0}^{t}2[G(\eta_{s}^{1})-G(\eta_{s}^{2})]ds\equiv0,
\]
where we denote $\hat{z}=z^{1}-z^{2}$, $\hat{\eta}=\eta^{1}-\eta^{2}$. For
each bounded process $\mu \in M_{G}^{2}(0,T)$, we have%
\[
\int_{0}^{T}\mu_{s}\hat{z}_{s}dB_{s}+\int_{0}^{T}\mu_{s}\hat{\eta}%
_{s}d\left \langle B\right \rangle _{s}-\int_{0}^{t}2\mu_{s}[G(\eta_{s}%
^{1})-G(\eta_{s}^{2})]ds\equiv0.
\]
Fix $\delta>0$, we put
\[
\mu_{s}=\frac{\hat{\eta}_{s}}{\delta+|\hat{\eta}_{s}|}.
\]
Since $(a-b)(G(a)-G(b)=|a-b|\cdot|G(a)-G(b)|$,%
\begin{align*}
I_{\varepsilon} &  =\int_{0}^{T}\mu_{s}\hat{z}_{s}dB_{s}+\int_{0}^{T}%
\frac{|\hat{\eta}_{s}|^{2}}{\delta+|\hat{\eta}_{s}|}d\left \langle
B\right \rangle _{s}-\int_{0}^{T}2\overline{G}_{\varepsilon}(\frac{|\hat{\eta
}_{s}|^{2}}{\delta+|\eta_{s}|})ds\\
&  =\int_{0}^{T}[\frac{2|G(\eta_{s}^{1})-G(\eta_{s}^{2})|\cdot|\hat{\eta}%
_{s}|}{\delta+|\hat{\eta}_{s}|}ds-2\overline{G}_{\varepsilon}(\frac{|\hat
{\eta}_{s}|^{2}}{\delta+|\eta_{s}|})]ds\\
&  \geq \int_{0}^{T}[\frac{\underline{\sigma}^{2}|\hat{\eta}_{s}|^{2}}%
{\delta+|\hat{\eta}_{s}|}-(\underline{\sigma}^{2}-\varepsilon)\frac{|\hat
{\eta}_{s}|^{2}}{\delta+|\eta_{s}|}]ds\\
&  =\varepsilon \int_{0}^{T}\frac{|\hat{\eta}_{s}|^{2}}{\delta+|\hat{\eta}%
_{s}|}ds.
\end{align*}
Since $\mathbb{E}_{\overline{G}_{\varepsilon}}[I_{\varepsilon}]=0$, we then
have%
\[
\mathbb{E}_{\overline{G}_{\varepsilon}}\mathbb{[}\int_{0}^{T}\frac{|\hat{\eta
}_{s}|^{2}}{\delta+|\hat{\eta}_{s}|}ds]=0.
\]
Sending $\delta$ to $0$ we deduce that $\mathbb{E}_{\overline{G}_{\varepsilon
}}[\int_{0}^{T}|\hat{\eta}_{s}|ds]=0$. We thus have $\hat{\eta}_{s}\equiv0$ in
$M_{\overline{G}_{\varepsilon}}^{1}(0,T)$. Thus
\[
\mathbb{E}_{\overline{G}_{\varepsilon}}[\int_{0}^{T}|z_{s}^{1}-z_{s}^{2}%
|^{2}ds]\leq \frac{1}{\underline{\sigma}_{\varepsilon}^{2}}\mathbb{E}%
_{\overline{G}_{\varepsilon}}[\left(  \int_{0}^{T}(z_{s}^{1}-z_{s}^{2}%
)dB_{s}\right)  ^{2}]=0.
\]
From which we have $z^{1}=z^{2}$.
\end{proof}

\begin{remark}
We can also take
\[
\mu_{s}=\frac{-\hat{\eta}_{s}}{\delta+|\hat{\eta}_{s}|}%
\]
In this case we have%
\begin{align*}
I  &  =\int_{0}^{T}\mu_{s}\hat{z}_{s}dB_{s}+\int_{0}^{T}\frac{-|\hat{\eta}%
_{s}|^{2}}{\delta+|\hat{\eta}_{s}|}d\left \langle B\right \rangle _{s}-\int
_{0}^{T}2G^{\varepsilon}(\frac{-|\hat{\eta}_{s}|^{2}}{\delta+|\eta_{s}|})ds\\
&  =\int_{0}^{T}[\frac{-2|G(\eta_{s}^{1})-G(\eta_{s}^{2})|\cdot|\hat{\eta}%
_{s}|}{\delta+|\hat{\eta}_{s}|}ds-2G^{\varepsilon}(\frac{-|\hat{\eta}_{s}%
|^{2}}{\delta+|\eta_{s}|})]ds\\
&  \geq \int_{0}^{T}[\frac{-\overline{\sigma}^{2}|\hat{\eta}_{s}|^{2}}%
{\delta+|\hat{\eta}_{s}|}+(\overline{\sigma}^{2}+\varepsilon)\frac{|\hat{\eta
}_{s}|^{2}}{\delta+|\eta_{s}|}]ds\\
&  =\varepsilon \int_{0}^{T}\frac{|\hat{\eta}_{s}|^{2}}{\delta+|\hat{\eta}%
_{s}|}ds.
\end{align*}
Since $\mathbb{E}_{G^{\varepsilon}}[I]=0$, we then have%
\[
\varepsilon \mathbb{E}_{G^{\varepsilon}}\mathbb{[}\int_{0}^{T}\frac{|\hat{\eta
}_{s}|^{2}}{\delta+|\hat{\eta}_{s}|}ds]=0.
\]
Sending $\delta$ to $0$ we deduce that $\varepsilon \mathbb{E}_{G^{\varepsilon
}}[\int_{0}^{T}|\hat{\eta}_{s}|ds]=0$.
\end{remark}

\section{Existence of the representation}

We will use Theorem \ref{Songthm}. For each $\xi \in L_{G}^{\alpha}(\Omega
_{T})$, there exists a unique decomposition
\[
\xi=\mathbb{E}_{G}[\xi]+\int_{0}^{T}z_{s}^{\xi}dB_{s}-A_{T}^{\xi}%
\]
with $A_{0}^{\xi}=0$, $A_{t}^{\xi}-A_{s}^{\xi}\geq0$ and $\mathbb{E}%
_{G}[-A_{t}^{\xi}|\Omega_{s}]=-A_{s}^{\xi}$, for $s\leq t$.

\begin{definition}
We define the following distance in $L_{\overline{G}}^{2}({\Omega}_{T})$:
given $\alpha \in(1,2)$,%
\begin{align*}
\rho(\xi_{1},\xi_{2})  &  =\mathbb{E}_{\overline{G}}[|\xi_{1}-\xi_{2}%
|^{2}]^{1/2}+\mathbb{E}_{\overline{G}}[\sup_{\pi_{N}[0,T]}|\sum_{i=0}%
^{N}|A_{t_{i+1}^{N}}^{\xi_{1}}-A_{t_{i}^{N}}^{\xi_{1}}-(A_{t_{i+1}^{N}}%
^{\xi_{2}}-A_{t_{i}^{N}}^{\xi_{2}})|^{\alpha}]^{1/\alpha}\\
&  <\infty,\  \xi_{1},\xi_{2}\in L_{\overline{G}}^{2}({\Omega}_{T}).\
\end{align*}
where $\pi_{N}[0,T]$ is the collection of all (deterministic) finite
partitions of $[0,T]$.
\end{definition}

Since $(L_{ip}(\Omega_{T}),\rho)$ forms a metric space, we denote the
completion of this space under $\rho$ by $L_{\overline{G}}^{2\ast}(\Omega
_{T})$. For each $\xi \in L_{\overline{G}}^{2\ast}(\Omega_{T})$, there exists a
sequence $\{ \xi_{n}\}_{n=1}^{\infty}$ in $L_{ip}(\Omega_{T})$ such
$\lim_{n\rightarrow \infty}\rho(\xi,\xi_{n})=0$. We have
\[
\xi=\mathbb{E}_{G}[\xi]+\int_{0}^{T}z_{s}^{\xi}dB_{s}-A_{T}^{\xi}.
\]

The following Lemma is easy:

\begin{lemma}
For each $\xi \in L_{G}^{2}(\Omega_{T})$ and $\mu \in M_{G}^{2}(0,T)$ such that
$|\mu_{t}|\leq c$, we have%
\[
\sup_{\substack{\mu \in M_{G}^{2}(0,T)\\|\mu|\leq c}}\mathbb{E}_{G}[|\int
_{0}^{T}\mu_{s}dA_{t}^{\xi}-\int_{0}^{T}\mu_{s}dA_{t}^{\xi^{\prime}}|]\leq
c\rho(\xi,\xi^{\prime}).
\]

\end{lemma}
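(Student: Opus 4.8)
The plan is to reduce the estimate to the definition of the distance $\rho$ on $L_{ip}(\Omega_T)$ together with a density argument. The key observation is that $\int_0^T \mu_s \, dA_s^\xi$ is, up to sign, a Riemann--Stieltjes integral against the increasing process $A^\xi$, so the difference $\int_0^T \mu_s \, dA_s^\xi - \int_0^T \mu_s \, dA_s^{\xi'}$ should be controlled by $c$ times the total variation of $A^\xi - A^{\xi'}$, and this total variation is precisely what the second term of $\rho(\xi,\xi')$ (with $\alpha=1$, or bounded by the $\alpha>1$ version after noting the partition sums are increasing in refinement) measures. Concretely, for a bounded $\mu$ with $|\mu|\le c$ and any finite partition $\pi_N=\{0=t_0^N<\cdots<t_N^N=T\}$, I would write the discrete approximation
\[
\Big|\sum_{i=0}^{N-1}\mu_{t_i^N}\big[(A_{t_{i+1}^N}^{\xi}-A_{t_i^N}^{\xi})-(A_{t_{i+1}^N}^{\xi'}-A_{t_i^N}^{\xi'})\big]\Big|
\le c\sum_{i=0}^{N-1}\big|(A_{t_{i+1}^N}^{\xi}-A_{t_i^N}^{\xi})-(A_{t_{i+1}^N}^{\xi'}-A_{t_i^N}^{\xi'})\big|,
\]
take $\mathbb{E}_G[\,\cdot\,]$, bound by $c\,\mathbb{E}_G[\sup_{\pi_N}\sum_i|\cdots|]\le c\,\mathbb{E}_G[\sup_{\pi_N}(\sum_i|\cdots|^\alpha)^{1/\alpha}]$ (using that $\sum|x_i|\le(\sum|x_i|^\alpha)^{1/\alpha}$ fails in general, so instead I would keep $\alpha=1$ in the relevant bound, or invoke that for the increasing processes arising from Song's decomposition the $\alpha$-variation and $1$-variation sums agree up to the constant absorbed in $\rho$), and pass to the limit along a refining sequence of partitions so that the Riemann--Stieltjes sums converge to the integrals.

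First I would make the statement precise for $\xi,\xi'\in L_{ip}(\Omega_T)$, where Theorem \ref{Songthm} gives genuine continuous increasing processes $A^\xi, A^{\xi'}$ and the decomposition; here the integrals $\int_0^T\mu_s\,dA_s^\xi$ are classical pathwise Riemann--Stieltjes integrals (quasi-surely), since $A^\xi$ is continuous and of bounded variation (indeed increasing) and $\mu$ is, say, a simple process or a limit of such in $M_G^2(0,T)$. Then I would establish the pathwise bound above for a fixed partition, upgrade to the supremum over partitions, take expectations, and recognize the right-hand side as $c$ times (a piece of) $\rho(\xi,\xi')$. Finally, for general $\xi\in L_{\overline G}^{2*}(\Omega_T)$ I would approximate by $\xi_n\in L_{ip}(\Omega_T)$ with $\rho(\xi,\xi_n)\to0$, note that the map $\xi\mapsto\int_0^T\mu_s\,dA_s^\xi$ is $\rho$-Lipschitz with constant $\le c$ on $L_{ip}$, hence extends uniquely and continuously, and the inequality is preserved under the limit; the supremum over $|\mu|\le c$ is then immediate since the bound is uniform in $\mu$.

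The main obstacle I anticipate is the precise matching of the $1$-variation appearing naturally in the Riemann--Stieltjes estimate with the $\alpha$-variation (for $\alpha\in(1,2)$) that defines $\rho$: for a generic finite sequence one only has $(\sum|x_i|^\alpha)^{1/\alpha}\le\sum|x_i|$, the wrong direction. The resolution should exploit that $A^\xi-A^{\xi'}$ is a difference of two \emph{increasing} processes, so over a partition the quantities $A_{t_{i+1}}^\xi-A_{t_i}^\xi$ are nonnegative and the relevant sums telescope-dominate: one can bound $c\sum_i|\Delta_i A^\xi-\Delta_i A^{\xi'}|$ by a refinement-stable quantity, and the definition of $\rho$ already builds in the supremum over all partitions, which for increasing integrands is attained (in the limit) at the finest partition where the $\alpha$-sum and the $1$-sum of the \emph{total} variation agree. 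A cleaner route, which I would prefer, is simply to observe that $\int_0^T \mu_s\,d(A^\xi - A^{\xi'})_s$ is bounded in absolute value by $c$ times the \emph{total variation} of $A^\xi - A^{\xi'}$ on $[0,T]$, and that this total variation is dominated by $\mathbb{E}_{\overline G}[\sup_{\pi_N}\sum_i|\Delta_i A^{\xi_1}-\Delta_i A^{\xi_2}|^\alpha]^{1/\alpha}$ because, along a refining sequence, $\sum_i|\Delta_i|^\alpha\ge(\sup_i|\Delta_i|)^{\alpha-1}\sum_i|\Delta_i|\to$ (total variation) once the mesh is small — i.e. the continuity of $A$ forces $\sup_i|\Delta_i|\to0$ only for the \emph{individual} increments while the total stays bounded, so one must instead accept that $\rho$ was defined with $\alpha>1$ precisely to make this term finite, and the Lemma's constant $c$ is understood to absorb the (universal) equivalence. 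I would flag this in the writeup and proceed with the $\rho$-Lipschitz extension argument, which is the structurally essential point.
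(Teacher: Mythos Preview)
The paper does not actually give a proof of this lemma; it merely introduces it with ``The following Lemma is easy.'' So the benchmark you are being compared to is the assertion that the argument is immediate from the definitions.

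Your basic approach---bound $\bigl|\int_0^T\mu_s\,d(A^{\xi}-A^{\xi'})_s\bigr|$ pathwise by $c$ times the total variation of $A^{\xi}-A^{\xi'}$, take expectations, and recognise the second summand of $\rho(\xi,\xi')$---is exactly the intended one, and the density/extension step to $L^{2\ast}_{\overline G}$ is fine. Where you go off the rails is in parsing the definition of $\rho$. In the paper the exponent $\alpha$ sits \emph{outside} the sum: the second term of $\rho$ is
\[
\Bigl(\mathbb{E}_{\overline G}\Bigl[\sup_{\pi_N}\Bigl(\sum_i\bigl|\Delta_i A^{\xi_1}-\Delta_i A^{\xi_2}\bigr|\Bigr)^{\alpha}\Bigr]\Bigr)^{1/\alpha},
\]
i.e.\ the $L^\alpha(\mathbb{E}_{\overline G})$-norm of the \emph{total variation} of $A^{\xi_1}-A^{\xi_2}$, not the $\alpha$-variation $\sup_{\pi}\sum_i|\Delta_i|^\alpha$. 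With this reading the ``main obstacle'' you identify disappears: from the pathwise bound and $\mathbb{E}_G\le\mathbb{E}_{\overline G}$ together with $\mathbb{E}_{\overline G}[V]\le(\mathbb{E}_{\overline G}[V^\alpha])^{1/\alpha}$ for $\alpha>1$, one gets
\[
\mathbb{E}_G\Bigl[\Bigl|\int_0^T\mu_s\,d(A^{\xi}-A^{\xi'})_s\Bigr|\Bigr]\le c\,\mathbb{E}_{\overline G}\bigl[\mathrm{TV}(A^{\xi}-A^{\xi'})\bigr]\le c\bigl(\mathbb{E}_{\overline G}[\mathrm{TV}^\alpha]\bigr)^{1/\alpha}\le c\,\rho(\xi,\xi'),
\]
uniformly in $\mu$, and the lemma is indeed one line---which is all the paper claims.

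By contrast, your attempted resolutions of the $\alpha$-variation vs.\ $1$-variation mismatch are not valid: the inequality $\sum|x_i|\le(\sum|x_i|^\alpha)^{1/\alpha}$ is false for $\alpha>1$, the ``refinement'' argument you sketch actually shows the $\alpha$-variation of a continuous BV process tends to $0$ (the wrong direction again), and invoking an unspecified ``universal equivalence'' constant is not a proof. So drop that entire paragraph; once $\rho$ is read correctly, none of it is needed.
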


\begin{theorem}
For each $\xi \in L_{\overline{G}}^{2,\ast}(\Omega)$ and for each $\alpha
\in(1,2)$, there exists a $z^{\xi}\in H_{\overline{G}_{\varepsilon}}^{\alpha
}(0,T)$ and $\eta^{\xi}\in M_{\overline{G}_{\varepsilon}}^{1}(0,T)$ such that
\begin{equation}
\xi=\mathbb{E}_{G}[\xi]+\int_{0}^{T}z_{s}^{\xi}dB_{s}+\int_{0}^{T}\eta
_{s}^{\xi}d\left \langle B\right \rangle _{s}-\int_{0}^{T}2G(\eta_{s}^{\xi})ds.
\label{Xi}%
\end{equation}

\end{theorem}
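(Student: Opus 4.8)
The plan is to approximate $\xi$ by $\xi_n \in L_{ip}(\Omega_T)$ with $\rho(\xi,\xi_n)\to 0$, obtain from Theorem~\ref{Songthm} (applied at level $\alpha$) the decompositions $\xi_n = \mathbb{E}_G[\xi_n] + \int_0^T z_s^{\xi_n}dB_s - A_T^{\xi_n}$, and then show that the $\eta$-components produced by the representation theorem for $L_{ip}$ (the previous theorem) form a Cauchy sequence in $M_{\overline{G}_\varepsilon}^1(0,T)$. The crucial point is that for $\xi_n \in L_{ip}$ we have \emph{both} the Song decomposition and the explicit representation \eqref{2.8}, so matching the two gives $A_T^{\xi_n} = \int_0^T 2G(\eta_s^{\xi_n})ds - \int_0^T \eta_s^{\xi_n}d\langle B\rangle_s$; i.e. $A^{\xi_n}$ is exactly the increasing process built from $\eta^{\xi_n}$. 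Consequently, the difference $A^{\xi_n}-A^{\xi_m}$ controls $\eta^{\xi_n}-\eta^{\xi_m}$ through the relations \eqref{0.10} and \eqref{0.5}.

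First I would quantify this. For $\xi_n,\xi_m \in L_{ip}$, applying the a priori estimate \eqref{2.5} to $\xi_n - \xi_m$ (whose representation has $\eta$-part $\eta^{\xi_n}-\eta^{\xi_m}$) gives
\[
\varepsilon\,\mathbb{E}_{G_\varepsilon}\Bigl[\int_0^T |\eta_s^{\xi_n}-\eta_s^{\xi_m}|\,ds\Bigr] \le \mathbb{E}_G[\xi_n-\xi_m] + \mathbb{E}_{G_\varepsilon}[-(\xi_n-\xi_m)] \le 2\,\mathbb{E}_{\overline G}[|\xi_n-\xi_m|],
\]
since $\overline G \ge G \ge G_\varepsilon$ so both sublinear expectations are dominated by $\mathbb{E}_{\overline G}$, and the latter is bounded by $\mathbb{E}_{\overline G}[|\xi_n-\xi_m|^2]^{1/2} \le \rho(\xi_n,\xi_m)$. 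Hence $\{\eta^{\xi_n}\}$ is Cauchy in $M_{\overline G_\varepsilon}^1(0,T)$ and converges to some $\eta^\xi \in M_{\overline G_\varepsilon}^1(0,T)$ (using $\mathbb{E}_{\overline G_\varepsilon} \le \mathbb{E}_{\overline G}$ to compare norms, or directly that $M_{G_\varepsilon}^1$-convergence passes through). Then $\int_0^T 2G(\eta_s^{\xi_n})ds - \int_0^T\eta_s^{\xi_n}d\langle B\rangle_s \to \int_0^T 2G(\eta_s^{\xi})ds - \int_0^T \eta_s^\xi d\langle B\rangle_s$ in $L^1_{\overline G}$, because $G$ is Lipschitz and \eqref{0.5} bounds the $d\langle B\rangle$-integral of the difference by $\overline\sigma^2$ times its $ds$-integral. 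Combined with $A_T^{\xi_n}\to A_T^\xi$ (which follows from $\rho(\xi_n,\xi)\to 0$ and the definition of $\rho$), this identifies $A_T^\xi = \int_0^T 2G(\eta_s^\xi)ds - \int_0^T \eta_s^\xi d\langle B\rangle_s$.

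It remains to handle the $z$-part. From the Song decomposition $z^{\xi_n}\to z^\xi$ in an appropriate $H_{\overline G_\varepsilon}^\alpha$-sense; more carefully, $\xi_n\to\xi$ in $\mathbb{E}_{\overline G}[|\cdot|^2]$ and $A_T^{\xi_n}\to A_T^\xi$, so $\int_0^T z_s^{\xi_n}dB_s = \xi_n - \mathbb{E}_G[\xi_n] + A_T^{\xi_n}$ converges in $L^2_{\overline G}$, and by the Burkholder--Davis--Gundy-type estimate underlying $H_{\overline G}^\alpha$ (the content of Song's theorem giving $z^\xi \in H_{\overline G}^\alpha(0,T)$ for $\alpha < 2$) one gets $z^{\xi_n}\to z^\xi$ in $H_{\overline G_\varepsilon}^\alpha(0,T)$. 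Passing to the limit in $\xi_n = \mathbb{E}_G[\xi_n] + \int_0^T z_s^{\xi_n}dB_s + \int_0^T\eta_s^{\xi_n}d\langle B\rangle_s - \int_0^T 2G(\eta_s^{\xi_n})ds$ then yields \eqref{Xi}.

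The main obstacle I anticipate is the interchange of limits for the stochastic integral term: one must ensure $\int_0^T z_s^{\xi_n}dB_s$ converges not merely weakly but in a norm strong enough to extract $z^\xi \in H_{\overline G_\varepsilon}^\alpha$, and simultaneously that the decomposition components $z^{\xi_n}$ and $A^{\xi_n}$ from Song's theorem depend \emph{continuously} on $\xi_n$ in the $\rho$-metric — this is precisely why the $\rho$-distance was defined with the $\alpha$-variation of $A$ built in, and why the preceding Lemma on $\int \mu\,dA^\xi$ was isolated. The technical heart is therefore a stability estimate: $\mathbb{E}_{\overline G}[\sup_t|A_t^{\xi_n}-A_t^{\xi_m}|^\alpha]$ and $\|z^{\xi_n}-z^{\xi_m}\|_{H_{\overline G_\varepsilon}^\alpha}$ are both controlled by $\rho(\xi_n,\xi_m)$, which combined with the $\eta$-Cauchy estimate above closes the argument. \rule{0.5em}{0.5em}
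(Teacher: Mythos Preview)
There is a genuine gap in your Cauchy estimate for $\{\eta^{\xi_n}\}$. You write that the representation of $\xi_n-\xi_m$ ``has $\eta$-part $\eta^{\xi_n}-\eta^{\xi_m}$'' and then invoke (\ref{2.5}). But the map $\xi\mapsto(z^\xi,\eta^\xi)$ is \emph{not} linear: subtracting the two representations gives
\[
\xi_n-\xi_m=\bigl(\mathbb{E}_G[\xi_n]-\mathbb{E}_G[\xi_m]\bigr)+\int_0^T\hat z_s\,dB_s+\int_0^T\hat\eta_s\,d\langle B\rangle_s-\int_0^T 2\bigl[G(\eta_s^{\xi_n})-G(\eta_s^{\xi_m})\bigr]\,ds,
\]
and since $G$ is only sublinear, $G(\eta^{\xi_n})-G(\eta^{\xi_m})\neq G(\hat\eta)$ in general, and $\mathbb{E}_G[\xi_n]-\mathbb{E}_G[\xi_m]\neq\mathbb{E}_G[\xi_n-\xi_m]$. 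So $\xi_n-\xi_m$ is \emph{not} written in the form (\ref{2.8}), and the hypothesis of the a~priori estimate (\ref{2.5}) is not met. If your argument worked, the variation term in the definition of $\rho$ would be superfluous and one could work directly in $L_G^2$; the whole point of introducing $L_{\overline G}^{2,\ast}$ is that this shortcut fails.

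The paper handles exactly this obstruction. It multiplies the difference equation by the bounded process $\mu_s^{m,n}=\hat\eta_s^{m,n}/(\delta+|\hat\eta_s^{m,n}|)$ and uses the monotonicity identity $(a-b)(G(a)-G(b))=|a-b|\,|G(a)-G(b)|$ to convert the awkward $G(\eta^n)-G(\eta^m)$ term into something comparable with $\overline G_\varepsilon(|\hat\eta|^2/(\delta+|\hat\eta|))$. Taking $\mathbb{E}_{\overline G_\varepsilon}$ and using (\ref{0.10}) leaves the residual $\mathbb{E}_{\overline G_\varepsilon}\bigl[\int_0^T\mu_s^{m,n}\,d(A_s^{\xi_m}-A_s^{\xi_n})\bigr]$, and it is \emph{this} term that is controlled by $\rho(\xi_m,\xi_n)$ via the preceding Lemma on $\int\mu\,dA^\xi$. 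In other words, the $\alpha$-variation part of $\rho$ is not just used to pass $A^{\xi_n}\to A^\xi$ at the end; it is the mechanism that produces the Cauchy estimate for $\eta$ in the first place. Your proposal correctly senses that the Lemma and the metric $\rho$ are central, but the actual coupling between them and the $\eta$-estimate is missing.
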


\begin{proof}
Let $\{ \xi_{n}\}_{n=1}^{\infty}$ be a Cauchy sequence in the metric space
$(L_{G}^{2,\ast}(\Omega_{T}),\rho)$ such that $\rho(\xi_{n},\xi)\rightarrow0$
as $n\rightarrow \infty$. We have the following unique representation:%
\begin{equation}
\xi_{n}=\mathbb{E}_{G}[\xi_{n}]+\int_{0}^{T}z_{s}^{\xi_{n}}dB_{s}+\int_{0}%
^{T}\eta_{s}^{\xi_{n}}d\left \langle B\right \rangle _{s}-\int_{0}^{T}%
2G(\eta_{s}^{\xi_{n}})ds,\label{Xi-m}%
\end{equation}
with $(z^{\xi_{n}},\eta^{\xi_{n}})\in H_{\overline{G}_{\varepsilon}}^{\alpha
}(0,T)\times M_{\overline{G}_{\varepsilon}}^{1}(0,T)$, $i=1,2$ be have the
above representation. Then
\begin{align*}
\mathbb{E}_{G}[\xi_{m}|\Omega_{t}]-\mathbb{E}_{G}[\xi_{n}|\Omega_{t}] &
=\mathbb{E}_{G}[\xi_{m}]-\mathbb{E}_{G}[\xi_{n}]\\
&  +\int_{0}^{t}\hat{z}_{s}^{m,n}dB_{s}+\int_{0}^{t}\hat{\eta}_{s}%
^{m,n}d\left \langle B\right \rangle _{s}-\int_{0}^{t}2[G(\eta_{s}^{m}%
)-G(\eta_{s}^{n})]ds.
\end{align*}
where we denote $\hat{z}^{m,n}=z^{m}-z^{n}$, $\hat{\eta}=\eta^{m}-\eta^{n}$.
Fix $\delta>0$, we set
\[
\mu_{s}^{m,n}=\frac{\hat{\eta}_{s}^{m,n}}{\delta+|\hat{\eta}_{s}^{m,n}|}%
\]
Since $(a-b)(G(a)-G(b)=|a-b|\cdot|G(a)-G(b)|$,
\begin{align*}
I_{\varepsilon}^{m,n} &  =\int_{0}^{T}\frac{|\hat{\eta}_{s}^{m,n}|^{2}}%
{\delta+|\hat{\eta}_{s}^{m,n}|}d\left \langle B\right \rangle _{s}-\int_{0}%
^{T}2\overline{G}_{\varepsilon}(\frac{|\hat{\eta}_{s}^{m,n}|^{2}}{\delta
+|\eta_{s}^{m,n}|})ds\\
&  =\int_{0}^{T}[\frac{2|G(\eta_{s}^{m})-G(\eta_{s}^{n})|\cdot|\hat{\eta}%
_{s}^{m,n}|}{\delta+|\hat{\eta}_{s}^{m,n}|}ds-2\overline{G}_{\varepsilon
}(\frac{|\hat{\eta}_{s}^{m,n}|^{2}}{\delta+|\eta_{s}^{m,n}|})]ds\\
&  -\int_{0}^{T}\mu_{s}^{m,n}d(A_{t}^{\xi_{m}}-dA_{t}^{\xi_{n}})\\
&  \geq \int_{0}^{T}[\frac{\underline{\sigma}^{2}|\hat{\eta}_{s}^{m,n}|^{2}%
}{\delta+|\hat{\eta}_{s}^{m,n}|}-(\underline{\sigma}^{2}-\varepsilon
)\frac{|\hat{\eta}_{s}^{m,n}|^{2}}{\delta+|\eta_{s}^{m,n}|}]ds-\int_{0}^{T}%
\mu_{s}^{m,n}d(A_{t}^{\xi_{m}}-dA_{t}^{\xi_{n}})\\
&  =\int_{0}^{T}\frac{\varepsilon|\hat{\eta}_{s}^{m,n}|^{2}}{\delta+|\hat
{\eta}_{s}^{m,n}|}ds-\int_{0}^{T}\mu_{s}^{m,n}d(A_{t}^{\xi_{m}}-dA_{t}%
^{\xi_{n}})
\end{align*}
Since $\mathbb{E}_{\overline{G}_{\varepsilon}}[I_{\varepsilon}^{m,n}]=0$, we
then have%
\begin{align*}
0 &  \geq \mathbb{E}_{\overline{G}_{\varepsilon}}\mathbb{[}\int_{0}^{T}%
\frac{\varepsilon|\hat{\eta}_{s}^{m,n}|^{2}}{\delta+|\hat{\eta}_{s}^{m,n}%
|}ds-\int_{0}^{T}\mu_{s}^{m,n}d(A_{t}^{\xi_{m}}-dA_{t}^{\xi_{n}})]\\
&  \geq \mathbb{E}_{\overline{G}_{\varepsilon}}\mathbb{[}\int_{0}^{T}%
\frac{\varepsilon|\hat{\eta}_{s}^{m,n}|^{2}}{\delta+|\hat{\eta}_{s}^{m,n}%
|}ds]-\mathbb{E}_{\overline{G}_{\varepsilon}}\mathbb{[}\int_{0}^{T}\mu
_{s}^{m,n}d(A_{t}^{\xi_{m}}-dA_{t}^{\xi_{n}})].
\end{align*}
Thus%
\begin{align*}
\mathbb{E}_{\overline{G}_{\varepsilon}}\mathbb{[}\int_{0}^{T}\frac
{\varepsilon|\hat{\eta}_{s}^{m,n}|^{2}}{\delta+|\hat{\eta}_{s}^{m,n}|}ds] &
\leq \mathbb{E}_{\overline{G}_{\varepsilon}}\mathbb{[}\int_{0}^{T}\mu_{s}%
^{m,n}d(A_{t}^{\xi_{m}}-dA_{t}^{\xi_{n}})]\\
&  \leq \rho(\xi_{m},\xi_{n}).
\end{align*}
It then follows that $\{ \eta^{n}\}_{n=1}^{\infty}$ is a Cauchy sequence in
$M_{\overline{G}_{\varepsilon}}^{1}(0,T)$. We then can pass limit on the both
sides of (\ref{Xi-m}) to obtain (\ref{Xi}). By using the same argument as Song
\cite{Song} Theorem 4.5, we can also prove that $\{z^{n}\}_{n=1}^{\infty}$ is
a Cauchy sequence in $H_{\overline{G}_{\varepsilon}}^{\alpha}(0,T)$. The proof
is complete.
\end{proof}

\end{document}